\numberwithin{equation}{section}
\numberwithin{figure}{section}
\newtheorem{theorem}{Theorem}[section]
\newtheorem*{theorema}{Theorem A}
\newtheorem*{theoremb}{Theorem B}
\newtheorem*{theoremc}{Theorem C}
\newtheorem*{theoremd}{Theorem D}
\newtheorem{proposition}[theorem]{Proposition}
\newtheorem{lemma}[theorem]{Lemma}
\newtheorem{example}[theorem]{Example}
\newtheorem{remark}[theorem]{Remark}
\newtheorem{definition}[theorem]{Definition}
\title[Some Remarks on Anosov Families]{Some Remarks on Anosov Families}
\author[J. Muentes]{Jeovanny Muentes}
 \address{Jeovanny de Jesus Muentes Acevedo,  Facultad de Ciencias B\'asicas,  Universidad Tecnol\'ogica de  Bolivar, Cartagena de Indias - Colombia}
\email{jmuentes@utb.edu.co}
\author[R. Ribeiro] {Raquel Ribeiro}
\address{Raquel Ribeiro Barroso, Instituto  de Matem\'atica e Estat\'istica, Universidade de S\~ao Paulo, Brazil}
\email{raquel.ribeiro.math@gmail.com}
\begin{document}

\begin{abstract}
We  study  Anosov families which are  sequences  of diffeomorphisms along compact Riemannian manifolds such that the tangent bundle split into expanding and contracting subspaces. In this paper we verify  that a certain class of  Anosov families: (i) admit canonical coordinates (ii)  are expansive, (iii) satisfy the shadowing property, and (iv) exhibit a Markov partition.

\end{abstract}


\subjclass[2010]{37C60, 37D20}

\keywords{Anosov families, Anosov diffeomorphisms,  Markov partitions,  uniform hyperbolicity, non-autonomous dynamical systems, expansiveness, shadowing}

\maketitle

\section{Introduction}

An \emph{Anosov family} is a (biinfinite) sequence of diffeomorphisms along a sequence of compact Riemannian manifolds, with  an invariant sequence of splittings of the tangent bundle into expanding and contracting subspaces, and with a uniform upper bound for the contraction and lower bound for the expansion.

\medskip

 Anosov families (Definition \ref{anosovfamily}) were introduced by P. Arnoux and A. Fisher in \cite{Fisher}, motivated by generalizing the
notion of Anosov diffeomorphisms.
The authors  concentrated their studies on linear Anosov families on the two-torus. The  first goal was to get a natural notion of completion for the collection of   the set of all orientation-preserving linear Anosov diffeomorphisms on the two-torus (see \cite{Fisher}).   Young \cite{Young} proved  that families consisting of $C^{1+1}$ perturbations of an Anosov  diffeomorphism of class $C^2$ are Anosov families.
 In \cite{KawanL} and \cite{SHA0} the authors studied formulas for the entropy of  a  non-stationary subshifts of finite type.   Muentes  studied   the  stable and unstable manifold Theorem for Anosov families and the   structural stability of  Anosov families on compact Riemannian manifolds (see \cite{JeoLS, JeoOA, JeoSS}). Recently,  Chupeng Wu and Yunhua Zhou in \cite{Chupeng},  obtained   a  symbolic representation for Anosov families  given by a non-stationary subshift of finite type   (see also \cite{Fisher}, \cite{Liu}). In this work we will study some  properties related to hyperbolicity in the Anosov families.

\medskip

  {From the work done by Walter in \cite{Walters}, many attempts have been made to express the
concept of hyperbolicity in topological terms. Notions as  shadowing, expansiveness, canonical coordinates, Markov partitions, and others, were essential to achieve results related to   hyperbolicity}. In this work we will study exactly these  properties in  Anosov families. We are able to obtain such results only for Anosov families whose sequence are of $C^2$-diffeomorphisms with bounded  derivative. The first result is about the structure of canonical coordinates for Anosov families (Definition \ref{Canonical}).

\begin{theorema}Anosov families admit canonical coordinates.
\end{theorema}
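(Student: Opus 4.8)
The plan is to build the canonical coordinate map $[\cdot,\cdot]$ directly from the local invariant manifolds, exploiting the uniform hyperbolicity of the family. First I would invoke the Stable and Unstable Manifold Theorem for Anosov families established in \cite{JeoLS,JeoSS}: for each index $n$ and each point $x$ in the $n$-th manifold there exist local stable and unstable manifolds $W^s_{\epsilon}(x)$ and $W^u_{\epsilon}(x)$ whose tangent spaces at $x$ are exactly the hyperbolic subspaces $E^s_n(x)$ and $E^u_n(x)$. The crucial input is that, because the family consists of $C^2$ diffeomorphisms with uniformly bounded derivatives, the size $\epsilon$ of these local manifolds may be chosen independent of $n$ and of the base point, and the angle between $E^s_n(x)$ and $E^u_n(x)$ is bounded below by a constant independent of $n$.

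Next I would set up the intersection problem. Fix $n$ and two points $x,y$ in the $n$-th manifold with $d(x,y)$ smaller than some $\delta$ to be determined. Working in exponential coordinates centred at $x$, the local unstable manifold $W^u_\epsilon(y)$ is the graph of a $C^1$ map from a ball in $E^u_n(y)$ into the stable direction, and $W^s_\epsilon(x)$ is the graph of a map from a ball in $E^s_n(x)$ into the unstable direction; both graph maps have small Lipschitz constant and pass through their respective base points. Because the two subspaces are uniformly transverse, locating $W^s_\epsilon(x)\cap W^u_\epsilon(y)$ reduces to a fixed-point equation, which a uniform contraction (equivalently, the implicit function theorem with uniform constants) solves for a unique point $[x,y]$ whenever $\delta$ is small enough. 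This point lies simultaneously on both local manifolds and therefore has the defining property of canonical coordinates in the sense of Definition \ref{Canonical}.

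Finally I would verify that the map $(x,y)\mapsto[x,y]$ is continuous and that $\epsilon$ and $\delta$ may be taken uniform over all indices $n$, so that the whole family, not merely a single fibre, admits the coordinates. Continuity follows from the continuous (indeed $C^1$) dependence of the graph maps on the base points together with the uniqueness of the intersection. The main obstacle I expect is precisely this uniformity: one must guarantee that the sizes of the local manifolds, the lower bound on the transversality angle, the Lipschitz constants of the graph maps, and the distortion of the charts are all controlled by constants independent of $n$. This is where the hypothesis of uniformly bounded $C^2$ derivatives is indispensable, since it bounds the curvature of the invariant manifolds and hence the Lipschitz data of their graph representations uniformly along the entire sequence.
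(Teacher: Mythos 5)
Your proposal is correct and follows essentially the same route as the paper: both realize $\mathcal{W}^{s}(p,\epsilon)$ and $\mathcal{W}^{u}(q,\epsilon)$ as uniformly $\alpha$-Lipschitz graphs in an exponential chart (Theorems \ref{variedadeinstave} and \ref{variedadeestavel}) and obtain the unique intersection point from uniform transversality, the paper closing the argument with a cone-crossing picture after invoking the property of the angles (Mather's lemma) to assume $E^{s}_{p}\perp E^{u}_{p}$, while you close it with an equivalent uniform contraction/implicit-function step. One clarification: the uniform lower bound on the angle between $E^{s}$ and $E^{u}$ is not obtained in the paper as a consequence of $\sup_{i}\Vert Df_{i}\Vert_{C^{2}}<\infty$, as your write-up suggests, but is the separate standing hypothesis (s.p.a.) built into the definition of $\mathcal{A}^{2}_{b}(\mathcal{M})$, so you should cite that hypothesis rather than derive the angle bound from the derivative bounds.
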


We also investigate the \emph{expansiveness} property. In rough terms, the concept of expansiveness means  that if two points stay near for forward and
backward iterates, then they must be equal. In some sense, expansive systems can be considered chaotic
since they exhibit sensitivity to the initial conditions. The appropriate notion of expansiveness for sequences of diffeomorphisms  is given in Definition \ref{Expansive}. Our second result follows:

\begin{theoremb}Anosov families are expansive.
\end{theoremb}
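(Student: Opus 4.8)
The plan is to derive Theorem B directly from the canonical coordinates produced in Theorem A, combined with the dynamical description of the local stable and unstable sets supplied by the Stable and Unstable Manifold Theorem for Anosov families in \cite{JeoLS}. The guiding principle is the classical one: two orbits that stay uniformly close for all forward time must lie on a common local stable manifold, two orbits that stay close for all backward time must lie on a common local unstable manifold, and the local product structure forces such manifolds to meet in a single point. First I would fix the uniform size $\epsilon>0$ of the local invariant manifolds $W^s_{\epsilon,i}(x)$ and $W^u_{\epsilon,i}(x)$, together with the two characterizations attached to them: if $d(f_i^n(x),f_i^n(y))\le\epsilon$ for all $n\ge 0$ then $y\in W^s_{\epsilon,i}(x)$, and if the same holds for all $n\le 0$ then $y\in W^u_{\epsilon,i}(x)$. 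The point is that $\epsilon$ can be chosen \emph{independently of the index} $i$, which is exactly what the uniform contraction and expansion bounds in Definition \ref{anosovfamily} provide.

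Next I would select the expansivity constant $\alpha$ of Definition \ref{Expansive} with $0<\alpha\le\epsilon$ small enough that, for every $i$ and every $x\in M_i$, the canonical coordinate map $[\,\cdot\,,\,\cdot\,]_i$ of Theorem A is defined on the $\alpha$-neighborhood of the diagonal and the transversal intersection $W^s_{\epsilon,i}(x)\cap W^u_{\epsilon,i}(x)$ reduces to the single point $\{x\}$. With this constant in hand, the argument is short: given $x,y\in M_i$ with $d(f_i^n(x),f_i^n(y))\le\alpha$ for every $n\in\mathbb{Z}$, the forward inequalities place $y$ in $W^s_{\epsilon,i}(x)$ and the backward inequalities place $y$ in $W^u_{\epsilon,i}(x)$. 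The local product structure of Definition \ref{Canonical}, i.e. Theorem A, then yields $y\in W^s_{\epsilon,i}(x)\cap W^u_{\epsilon,i}(x)=\{x\}$, so $x=y$, which is precisely expansiveness.

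The hard part is the \emph{uniformity over the index} $i$. Because an Anosov family is not a single compact system, I cannot extract $\epsilon$ and $\alpha$ by a compactness argument over one phase space; instead every quantity used above—the common radius $\epsilon$ of the local manifolds, the domain on which $[\,\cdot\,,\,\cdot\,]_i$ is defined, and a lower bound for the angle between $E^s$ and $E^u$ that guarantees $W^s_{\epsilon,i}(x)\cap W^u_{\epsilon,i}(x)=\{x\}$—must be controlled solely through the uniform hyperbolicity constants of the family. This is where the standing hypothesis that the $f_i$ are $C^2$ with uniformly bounded derivatives is indispensable: it is what makes the estimates behind the characterizations of $W^s_{\epsilon,i}$ and $W^u_{\epsilon,i}$ index-free, so that a single $\alpha$ works simultaneously for all $i\in\mathbb{Z}$. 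Verifying that these constants are genuinely independent of $i$ is the crux of the proof; once it is secured, the expansiveness conclusion follows verbatim from the autonomous case.
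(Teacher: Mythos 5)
Your proposal is correct, but it follows a genuinely different route from the paper. The paper does not use Theorem A or the invariant-manifold structure at all: it quotes Proposition \ref{wer} (Proposition 7.5 of \cite{JeoSS}), which gives the quantitative bound $d(p,q)\leq 2\sqrt{2}(\eta^{-1}-\zeta)^{-N}r$ whenever the orbits of $p,q\in M_{0}$ stay $r$-close on the window $[-N,N]$, and then simply lets $N\to\infty$ to force $p=q$. Your argument is instead the classical soft one: forward closeness puts $q$ in $\mathcal{W}^{s}(p,\epsilon)$ and backward closeness puts $q$ in $\mathcal{W}^{u}(p,\epsilon)$ --- this is exactly Proposition \ref{dasddd} of the paper, provided you phrase the hypothesis with a constant $\beta<\epsilon/2$ rather than $\epsilon$ itself --- and then the singleton intersection $\mathcal{W}^{s}(p,\epsilon)\cap\mathcal{W}^{u}(p,\epsilon)=\{p\}$ finishes. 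For that last step you invoke Theorem A, which works (apply it with $q=p$), though it is worth noting you do not even need it: since both local manifolds are graphs of $\alpha$-Lipschitz maps with $\alpha<1$ over complementary orthogonal subspaces (Theorems \ref{variedadeinstave}(i) and \ref{variedadeestavel}(i)), a point in the intersection satisfies $\Vert z_{s}\Vert\leq\alpha^{2}\Vert z_{s}\Vert$, hence equals $p$. The trade-off: the paper's proof is a one-liner but leans entirely on an external quantitative estimate, and in exchange yields an explicit separation rate; your proof is self-contained given Sections 3--4 of the paper and makes the logical dependence on local product structure explicit. Your emphasis on uniformity in the index $i$ is also well placed --- it is precisely what the $\mathcal{A}^{2}_{b}(\mathcal{M})$ hypothesis (bounded $C^{2}$ norms plus the property of the angles) delivers, since the $\epsilon$ of Theorems \ref{variedadeinstave} and \ref{variedadeestavel} is index-free.
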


\emph{Shadowing} was introduced by Anosov and it is central in hyperbolic dynamic. For
instance, it is fundamental in the proof of the $C^1$ structural
stability of  uniformly hyperbolic systems (see \cite{KH}, \cite{shub}). Roughly
speaking, it allows us to trace a set of points which looks like an
orbit, but with errors, by a true orbit. For practical
applications, we can suppose that a map $f$  is viewed as the orbit
realized in numerical calculation by computer, or in physical
experiments, thus it could have errors. Then shadowing property
allow us to ``correct'' this errors, finding a true evolution
which nicely approximates $f$.
Many authors have studied these properties and its relation with the hyperbolicity, for example, \cite{P2},   \cite{P}, \cite{S1}, \cite{P4}, and also, in many contexts, as in  \cite{Ribeiro03}, \cite{Ribeiro02} and \cite{Ribeiro04}. Thus, to decide which systems
possess the shadowing property is an important problem in
dynamics.
So, we  can ask:
\vspace{0.2cm}

Question: \emph{How would be shadowing for Anosov families?  Would Anosov families have any shadowing notion}?

\vspace{0.2cm}

The precise definition of shadowing  for  Anosov families is  in Section \ref{SectionShadowing}. We conclude our third result.

\begin{theoremc} Anosov families have shadowing property.
\end{theoremc}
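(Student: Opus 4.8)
The plan is to reduce the shadowing property to a uniform contraction-mapping argument in the style of Bowen's proof of the classical shadowing lemma, transported to the non-autonomous setting. Write the family as $\{f_i\colon M_i\to M_{i+1}\}_{i\in\mathbb{Z}}$ with invariant splitting $T_{x}M_i=E^s_i(x)\oplus E^u_i(x)$ and uniform hyperbolicity constants. Given a $\delta$-pseudo-orbit $(x_i)_{i\in\mathbb{Z}}$, i.e. $d(f_i(x_i),x_{i+1})<\delta$, I want a genuine orbit $(y_i)$ with $y_{i+1}=f_i(y_i)$ and $\sup_i d(x_i,y_i)<\varepsilon$. First I would pass to exponential charts, writing $y_i=\exp_{x_i}(v_i)$ for small vectors $v_i\in T_{x_i}M_i$, so that the orbit condition $f_i(y_i)=y_{i+1}$ becomes, after composing with $\exp_{x_{i+1}}^{-1}$, an equation of the form $v_{i+1}=A_i v_i+g_i(v_i)+w_i$, where $A_i$ is the linear part (a small perturbation of $Df_i(x_i)$), the term $g_i$ collects the higher-order part with $g_i(0)=0$ and $Dg_i(0)=0$, and $w_i$ encodes the pseudo-orbit error with $|w_i|$ of order $\delta$.

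Next I would solve this equation as a fixed point in the Banach space $\ell^\infty(\mathbb{Z})$ of uniformly bounded sequences of tangent vectors. Splitting each $v_i=v_i^s+v_i^u$ along $E^s_i\oplus E^u_i$ and using the uniform estimates $\|A_i|_{E^s}\|\le\lambda<1$ and $\|A_i^{-1}|_{E^u}\|\le\lambda<1$, the linear operator $v\mapsto (v_{i+1}-A_iv_i)_i$ is boundedly invertible on $\ell^\infty$: one inverts it componentwise by summing the two geometric series, forward on the stable part and backward on the unstable part. Denoting this inverse by $L^{-1}$, the orbit equation becomes $v=L^{-1}(g(v)+w)=:\Phi(v)$. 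Since the family is $C^2$ with uniformly bounded derivatives, the remainders $g_i$ have a uniform Lipschitz constant on a fixed small ball, which can be made as small as we like by shrinking the radius; choosing the radius and then $\delta$ small, $\Phi$ maps a small closed ball of $\ell^\infty$ into itself and is a contraction there. The unique fixed point produces bounded $(v_i)$, hence the shadowing orbit $y_i=\exp_{x_i}(v_i)$, with $\sup_i|v_i|\le\varepsilon$.

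The main obstacle is uniformity across the infinitely many manifolds $M_i$: with no single compact phase space, every constant in the argument --- the injectivity radii, the radius on which each $\exp_{x_i}$ is a diffeomorphism with controlled distortion, the Lipschitz bound for $g_i$, the hyperbolicity rate $\lambda$, and the lower bound on the angle between $E^s_i$ and $E^u_i$ needed to estimate $\|L^{-1}\|$ uniformly --- must be taken independent of $i$. This is precisely where the standing hypothesis that the $f_i$ are $C^2$ with uniformly bounded derivatives enters, and it replaces the compactness arguments available for a single Anosov diffeomorphism; establishing these uniform bounds is the technical heart of the proof.

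Finally, I would note that Theorem A offers an alternative, more geometric route: the canonical coordinates (local product structure) supply, for nearby points, the bracket $[x,y]\in W^s_\varepsilon(x)\cap W^u_\varepsilon(y)$ together with uniform contraction along the local stable and unstable sets, and one can build the shadowing orbit directly by iterating the bracket operation to successively correct the pseudo-orbit, convergence being guaranteed by the contraction along stable and unstable manifolds. Either way, the crux is the same uniform control furnished by the bounded-$C^2$ hypothesis.
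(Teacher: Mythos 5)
Your proposal is correct in outline, but your primary argument is a genuinely different route from the paper's. The paper never linearizes: it takes the canonical coordinates of Theorem A and the contraction estimates for the local stable and unstable manifolds (Section 3) as given, and constructs the shadowing point geometrically. For a finite pseudo-orbit it sets $y_0=x_0$ and recursively $y_{k+1}=[x_{k+1},f_k(y_k)]$, with constants $\epsilon_1<(1-\lambda)\min\{\epsilon,\beta\}$, $\eta=\epsilon_1/(1-\lambda)$ and $\delta<\beta-\eta$ chosen so that the bracket stays defined at every step; the backward iterates then satisfy $\mathcal{F}_{k-j}^{-j}(y_k)\in\mathcal{W}^u(y_{k-j},\theta_j)$ with $\theta_j=\sum_{i=1}^{j}\lambda^{i}\epsilon_1<\eta$, so the orbit of $y=\mathcal{F}_n^{-n}(y_n)$ $\beta$-shadows the finite pseudo-orbit; two-sided infinite pseudo-orbits are then handled by a compactness/limit argument in $M_0$, and uniqueness comes from expansiveness (Theorem B). This is exactly the route you mention only in your closing sentence. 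Your $\ell^\infty(\mathbb{Z})$ fixed-point argument in the style of Bowen and Palmer does adapt to Anosov families, and it buys something real: it needs neither Theorem A nor compactness of the fibers (the fixed point produces the shadowing orbit for the infinite pseudo-orbit in one stroke, with no limit step), and uniqueness within the contraction ball comes for free. But two points you pass over quickly are where the actual work lies. First, since $(x_i)$ is only a pseudo-orbit, $D_{x_i}f_i$ carries $E^s_{x_i}\oplus E^u_{x_i}$ to the splitting at $f_i(x_i)$, not at $x_{i+1}$; to make your $A_i$ uniformly almost block-diagonal you need a modulus of continuity for the splitting that is uniform in $i$, which does \emph{not} follow from compactness of each $M_i$ separately and must be extracted from the uniform $C^2$ bounds, the property of the angles, and bounded geometry of the exponential charts --- this is precisely the analytic content the paper avoids re-proving by quoting its invariant-manifold theorems. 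Second, the theorem asserts uniqueness of the shadowing orbit; your contraction gives uniqueness only among orbits staying in the small chart ball, so you must either arrange that ball to have radius at least $\beta$ or close the argument, as the paper does, by invoking expansiveness.
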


Shadowing has many applications and one of them is to get a Markov partition \cite{Z01}. In  Section 3.4 of \cite{Fisher05}, Arnoux and Fisher, gave a
 symbolic representation for an Anosov family that admits  a Markov partition
sequence. In this case the symbolic representation is given by a \textit{non-stationary subshifts of finite type}, which  was first investigated in \cite{Fisher05} with the motivation to study  Anosov families via coding and to deduce properties of adic transformations. In this paper we study   Markov partitions (Definition \ref{Markov}) for Anosov families. We consider  Anosov families whose  the sequence of manifold is constant, that is, the manifolds are equal, and for these families  we prove our fourth and last result:


\begin{theoremd}Anosov family has Markov partition.
\end{theoremd}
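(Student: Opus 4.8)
The plan is to adapt the classical Sinai--Bowen construction of Markov partitions for a single Anosov diffeomorphism to the non-stationary setting, leaning on the three properties already established. Recall that for an Anosov diffeomorphism $f\colon M\to M$ one builds the partition out of \emph{rectangles}, i.e.\ sets $R$ closed under the local product (bracket) operation $[x,y]=W^{s}_{\mathrm{loc}}(x)\cap W^{u}_{\mathrm{loc}}(y)$; here this bracket is exactly the local product structure furnished by the canonical coordinates of Theorem~A. Since in Theorem~D the manifolds are all equal to a fixed $M$, I would produce, for every index $n\in\mathbb Z$, a finite cover of $M$ by such rectangles $\{R^{n}_{1},\dots,R^{n}_{m}\}$ and then refine it to a genuine partition $\mathcal R^{n}$ so that the whole sequence $(\mathcal R^{n})_{n\in\mathbb Z}$ satisfies the Markov compatibility of Definition~\ref{Markov}: whenever $x\in\operatorname{int}R^{n}_{i}$ and $f_n(x)\in\operatorname{int}R^{n+1}_{j}$ one has $f_n\bigl(W^{s}(x,R^{n}_{i})\bigr)\subset W^{s}(f_n(x),R^{n+1}_{j})$ and $f_n\bigl(W^{u}(x,R^{n}_{i})\bigr)\supset W^{u}(f_n(x),R^{n+1}_{j})$, where $W^{s}(x,R)$ and $W^{u}(x,R)$ denote the stable and unstable fibres of $R$ through $x$.

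First I would fix the scales. Given the expansivity constant from Theorem~B, choose $\alpha$ smaller than half of it, and let $\beta=\beta(\alpha)$ be the shadowing constant of Theorem~C, so that every $\beta$-pseudo-orbit is $\alpha$-shadowed by a \emph{unique} true orbit. Because $M$ is compact I can select, at each level $n$, a finite $\beta$-dense set $P_n\subset M$; the uniform bounds on the derivatives of the $f_n$ let me keep the cardinalities, the diameters and the local product neighbourhoods uniformly controlled in $n$. To each admissible pseudo-orbit $(p_k)_{k\in\mathbb Z}$ with $p_k\in P_{n+k}$ the shadowing-plus-expansiveness mechanism assigns the unique orbit it tracks, and the set of points whose forward and backward iterates stay $\alpha$-close to the chosen pseudo-orbit through $p_0$ defines the initial rectangle based at $p_0$ in level $n$. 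Using the bracket $[\cdot,\cdot]$ I verify that these sets are rectangles and that, for each $n$, they cover $M$.

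Next comes the refinement, which is the heart of the argument. Following Bowen, from the finite cover I pass to the partition whose pieces are the nonempty product-closed sets obtained by cutting each rectangle along the stable and unstable boundaries of the rectangles that its image and preimage meet; concretely I intersect $R^{n}_{i}$ with the $f_n$-preimages of the unstable fibres and with the $f_{n-1}$-images of the stable fibres of the neighbouring rectangles. Expansiveness guarantees that after finitely many such cuts the diameters are still bounded by $\alpha$ and that distinct pieces have disjoint interiors, while the bracket structure from Theorem~A shows the resulting pieces are again rectangles. The Markov property for the pair $(\mathcal R^{n},\mathcal R^{n+1})$ then follows as in the stationary case: a stable fibre of a piece maps by $f_n$ into a stable fibre of the piece containing its image, and an unstable fibre maps onto a full unstable fibre, precisely because the cuts were made along the images and preimages of these fibres.

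The main obstacle I anticipate is uniformity across the index $n$. In the classical theorem everything happens for one map, so the finiteness of the partition and the control of boundary diameters are automatic; here I must ensure that a single pair $(\alpha,\beta)$ and a single bound on the number and size of the rectangles work simultaneously for all $n$, and that the refinement terminates after a number of steps independent of $n$. This is where the standing hypothesis that the $f_n$ are $C^{2}$ with uniformly bounded derivatives—hence with uniform hyperbolicity constants and a uniform local product structure, by Theorem~A—is essential: it yields $n$-independent estimates for the sizes of the local stable and unstable manifolds and for the contraction and expansion rates, which make the cutting procedure and the diameter bounds uniform. Controlling the stable and unstable boundaries $\partial^{s}\mathcal R^{n}$ and $\partial^{u}\mathcal R^{n}$ simultaneously for every $n$, and checking that they are carried into one another by the $f_n$, is the delicate technical point on which the whole construction rests.
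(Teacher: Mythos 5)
Your first stage coincides with the paper's construction: the paper also fixes a finite $\gamma$-dense set $P\subset M$ (the \emph{same} set at every level, which is what keeps $\max_i\mathrm{Card}(\mathcal{R}^i)$ finite), forms the space $\Sigma_0(P)$ of $\alpha$-pseudo-orbits with entries in $P$, proves the shadowing map $\theta_i$ is continuous and surjective, and defines the covering rectangles $T_k^i=\{\theta_i(\bar a):\bar a\in\Sigma_i(P),\ a_0=p_k\}$ --- exactly your ``initial rectangles based at $p_0$''. Your diagnosis of the uniformity issue and its resolution (fixed compact $M$, uniform $C^2$ bounds) also matches the paper. The gaps are in the second stage, where you depart from Bowen's actual mechanism.

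First, you never state or verify the lemma on which the entire construction rests: the covering rectangles themselves already satisfy the Markov-type inclusions $f_i(\mathcal{W}^s(x,T_j^i))\subseteq\mathcal{W}^s(f_i(x),T_k^{i+1})$ and $\mathcal{W}^u(f_i(x),T_k^{i+1})\subseteq f_i(\mathcal{W}^u(x,T_j^i))$ whenever $x\in T_j^i$ and $f_i(x)\in T_k^{i+1}$. The paper derives this from the commutation $\theta_{i+1}\circ\sigma_i=f_i\circ\theta_i$ together with the bracket identity $\theta_i([\bar a,\bar b]^i)=[\theta_i(\bar a),\theta_i(\bar b)]$; it is precisely because the $T$'s are shadowing images of cylinder sets that they interact well with the dynamics. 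Second, given this pre-Markov property, Bowen's refinement is purely \emph{static}: each $T_j^i$ is cut into the four pieces $T_{j,k}^{i,n}$, $n=1,\dots,4$, according to whether $\mathcal{W}^u(x,T_j^i)\cap T_k^i$ and $\mathcal{W}^s(x,T_j^i)\cap T_k^i$ are empty, the $T_k^i$ ranging over rectangles overlapping $T_j^i$ \emph{at the same level} $i$; no images or preimages under the maps $f_n$ enter, so finiteness of each $\mathcal{R}^i$ is automatic. Your version --- cutting along $f_n$-preimages of unstable fibres and $f_{n-1}$-images of stable fibres of neighbouring rectangles --- makes the level-$n$ partition depend on the partitions at levels $n\pm1$, whose construction depends on levels $n\pm2$, and so on; moreover each cut creates new stable/unstable boundary pieces whose own images and preimages are not boundaries at the adjacent levels, so the procedure does not close up, and expansiveness gives no guarantee that ``finitely many such cuts'' suffice. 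Finally, even with the correct static refinement, the Markov condition is not automatic ``because the cuts were made along fibres'': the paper needs a separate and nontrivial claim (if $x,y$ lie in the same refined piece and $y\in\mathcal{W}^s(x,\varepsilon)$, then $f_i(x)$ and $f_i(y)$ lie in the same refined piece at level $i+1$), followed by a density/continuity argument, to push the inclusions from the $T$'s down to the refined rectangles. Supplying the pre-Markov lemma and replacing your dynamical cuts by the same-level subdivision would turn your sketch into the paper's proof.
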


This article is organized as follows: In Section 2 we will  define precisely an  Anosov family and the objects that we will study in this work. We will make  important considerations, notations and comments  which are relevant in this context. In Section 3,  we  will mention the Stable and Unstable Manifold Theorem for Anosov families, proved by the first author in \cite{JeoLS}.
Sections 2 and 3 will ease the understanding of the behavior of the Anosov families. We will prove Theorem A in Section 4,  which is essential to obtain shadowing  for Anosov families. Theorems B and C will be proved in Section 5. In Section 6, we prove Theorem D, that is, in certain contexts there is a Markov partition for  Anosov families. We reserve the last section, Section 7, to propose future issues, and further generalizations about Anosov families.

 \section{Anosov Families and Definitions} \label{SectionAnosovFamily}


In this section we will introduce  Anosov family and we will mention the main elements that will be used throughout this work. In addition, we will give some examples and observations  of this class of systems. Firstly we will define the objects that are part of the context in which Anosov families are introduced.

\medskip

For $i\in \mathbb{Z}$, consider  a sequence of Riemannian manifolds   $M_{i}$   with a fixed Riemannian metric $\langle \cdot, \cdot\rangle_{i}$  and injectivity radius $\varrho_{i}$. We will suppose that $\varrho=\underset{i\in\mathbb{Z}}{\inf}\varrho_{i}>0$   (see \cite{JeoSS}, Remark 2.7). Take the  \textit{disjoint union}  $$\mathcal{M}=\coprod_{i\in \mathbb{Z}}{M_{i}}=\bigcup_{i\in \mathbb{Z}}{M_{i}\times{i}}.$$
   $\mathcal{M}$ will be endowed with the Riemannian metric $\langle \cdot, \cdot\rangle$    induced by  $\langle \cdot, \cdot\rangle_{i} $, setting
\(\langle \cdot, \cdot\rangle|_{M_{i}}=\langle \cdot, \cdot\rangle_{i} \)  for \(i\in \mathbb{Z}.\)  We denote by $\Vert \cdot\Vert_{i}$ the induced norm by  $\langle\cdot,\cdot\rangle_{i}$ on $TM_{i}$ and we will take   $\Vert \cdot \Vert$ defined on  $\mathcal{M}$  as    $\Vert \cdot\Vert|_{M_{i}}=\Vert \cdot\Vert_{i} $ for $i\in \mathbb{Z}$.

\begin{definition}\label{leidecomposicao} A  \emph{non-stationary dynamical system}  $(\mathcal{M},\langle\cdot,\cdot\rangle, \mathcal{F})$  is a map $\mathcal{F}:\mathcal{M}\rightarrow \mathcal{M}$, such that, for each $i\in\mathbb{Z}$, $\mathcal{F}|_{M_{i}}=f_{i}:M_{i}\rightarrow M_{i+1}$ is a  diffeomorphism. Sometimes we use the notation   $\mathcal{F}=(f_{i})_{i\in\mathbb{Z}}$. The composition law is defined   to be
\begin{equation*}
\mathcal{F}_{i} ^{\, n}:=
\begin{cases}
f_{i+n-1}\circ \cdots\circ f_{i}:M_{i}\rightarrow M_{i+n}  & \mbox{if }n>0 \\
  f_{i-n}^{-1}\circ \cdots\circ f_{i-1}^{-1}:M_{i}\rightarrow M_{i-n} & \mbox{if } n<0  \\
	 I_{i}:M_{i}\rightarrow M_{i}  & \mbox{if } n=0,\\
 \end{cases}
\end{equation*}
where $I_{i}$ is the identity on $M_{i}$.
\end{definition}

Non-stationary dynamical systems are classified via  \textit{topological equiconjugacy}:

\begin{definition}\label{definconjugacy} A \emph{topological equiconjugacy} between  $ \mathcal{F}=(f_{i})_{i\in\mathbb{Z}}$  and  $ \mathcal{G}=(g_{i})_{i\in\mathbb{Z}}$
is a map $\mathcal{H}:\mathcal{M}\rightarrow \mathcal{M}$, such that, for  each $i\in \mathbb{Z} ,$ $\mathcal{H}|_{M_{i}}=h_{i}:M_{i}\rightarrow M_{i}$ is a homeomorphism, $(h_{i})_{i\in\mathbb{Z}}$  and $(h_{i}^{-1})_{i\in\mathbb{Z}}$ are equicontinuous and     \(h_{i+1}\circ f_{i}=g_{i}\circ h_{i} .\)
In that case, we will say the families are \textit{equiconjugate}.
\end{definition}

Now, we have all the elements to rigorously define an Anosov family.

 \begin{definition}\label{anosovfamily}    An  \emph{Anosov family} on $\mathcal{M}$ is  a non-stationary dynamical system     $(\mathcal{M},\langle\cdot,\cdot\rangle, \mathcal{F})$ such that:
\begin{enumerate}[i.]
\item the tangent bundle $T\mathcal{M}$ has a continuous splitting   $E^{s}\oplus E^{u}$ which is  $D\mathcal{F}$-\textit{invariant}, i. e., for each $p\in \mathcal{M}$,
 $T_{p}\mathcal{M}=E^{s}_{p}\oplus E^{u}_{p}$ with $D_{p}\mathcal{F}(E^{s}_{p})= E^{s}_{\mathcal{F}(p)}$ and $D_{p}\mathcal{F} (E^{u}_{p})= E^{u}_{\mathcal{F}(p)}$, where $T_{p}\mathcal{M}$ is the    tangent space at $p;$
\item there exist constants $\lambda \in (0,1)$ and $c>0$ such that for each  $i\in \mathbb{Z}$, $n\geq 1$,    and $p\in M_{i}$,
we have: \[\Vert D_{p} (\mathcal{F}_{i}^{n})(v)\Vert \leq c\lambda^{n}\Vert v\Vert \text{ if   }v\in E_{p}^{s}\quad\text{and}\quad \Vert D_{p} (\mathcal{F}_{i}^{-n}) (v)\Vert \leq c\lambda^{n}\Vert v\Vert  \text{ if }v\in E_{p}^{u}.\]
\end{enumerate}
The  subspaces $E^{s}_{p}$ and $E^{u}_{p}$ are called     \textit{stable} and \textit{unstable} subspaces, respectively.
\end{definition}

If we can take
$c=1$ we say the family is \textit{strictly Anosov}.

\medskip

The next example, which is due to Arnoux and Fisher \cite{Fisher05}, Example 3, proves that Anosov families are not  necessarily sequences of Anosov diffeomorphisms. A random version of the example can be found in \cite{GUNDLACH}, Example 2.7. More examples can be found in  \cite{Fisher05, JeoLS,JeoSS}.

\begin{example}\label{multipl}    For any sequence of positive integers  $(n_{i})_{i\in \mathbb{Z}}$ set
 \begin{equation*} A_{i} =
\left(
\begin{array}{ccc}
1 & 0 \\
n_{i} & 1
 \end{array}
\right) \text{ for $i$ even}\quad\text{ and } \quad A_{i} =
\left(
\begin{array}{ccc}
1 & n_{i} \\
0 & 1
 \end{array}
\right) \text{ for $i$ odd},
\end{equation*}
acting on the 2-torus $M_{i}=\mathbb{T}^{2}$. The family $(A_i)_{i\in\mathbb{Z}}$ is an Anosov family.  \end{example}

 \begin{definition}\label{propang2} An Anosov family satisfies the \emph{property of the angles} (or \emph{s.p.a.}) if the angle between the stable and unstable subspaces are bounded away from zero (see \cite{JeoLS,JeoOA,JeoSS}).
 \end{definition}

\begin{remark}Fix an Anosov diffeomorphism $\phi$ on a Riemannian manifold $M$. For each $i\in\mathbb{Z}$, we can endow  $M_{i}=M$ with  a suitable Riemannian metric such that if we  consider  $f_{i}=\phi$ for any $i\in\mathbb{Z}$, then $(f_{i})_{i\in\mathbb{Z}}$ is an Anosov family such that the angle between the unstable and stable subspaces at some points of $M$ converges to zero as $i\rightarrow \infty$ (see \cite{JeoLS}, Example 2.4). That is, there exist Anosov families which do not satisfy the property of angles.
\end{remark}

Now we define some important sets which we will use throughout this work. Fix $m\geq 1$. The set   \[\mathcal{D}^{m}(\mathcal{M})=\{\mathcal{F}=(f_{i})_{i\in\mathbb{Z}}: f_{i}:M_{i}\rightarrow M_{i+1} \text{ is a  }C^{m}\text{-diffeomorphism}\}\]
can be endowed with the \textit{strong topology} and the \textit{uniform topology} (see \cite{JeoOA,JeoSS}). The subset of $\mathcal{D}^{m}(\mathcal{M})$ consisting   of Anosov families will be denoted by $\mathcal{A}^{m}(\mathcal{M})$.

  \medskip

Consider the set \[\mathcal{A}^{2}_{b}(\mathcal{M})=\{\mathcal{F}=(f_{i})_{i\in\mathbb{Z}}\in \mathcal{D}^{2}(\mathcal{M}): \mathcal{F}  \text{ is Anosov, s.p.a.  and }\sup_{i\in\mathbb{Z}}\Vert Df_{i}\Vert_{C^{2}}<\infty\},\]
where \( \Vert \phi\Vert_{C^{2}}= \max\left\{ \Vert D \phi\Vert,  \Vert D\phi^{-1}\Vert,  \Vert D^{2}\phi\Vert, \Vert D^{2}\phi^{-1}\Vert \right\} \)   for a $ C^{2}$-diffeomorphism \(\phi.\)




\section{Stable and Unstable Manifolds for Anosov Families}

In \cite{JeoLS} the author proved the local unstable and stable manifold theorem for Anosov family. This theorem is essential to prove Theorem A. In this section we will state the results that will be used in the next section. Firstly we note:

\begin{remark}In this section, we will consider $\mathcal{F}=(f_{i})_{i\in\mathbb{Z}}\in\mathcal{A}^{2}_{b}(\mathcal{M})$.\end{remark}

Now we define some sets which will be used throughout the work. Given $\varepsilon>0$ and $p\in\mathcal{M}$, set:
\begin{enumerate}[\upshape (i)]
\item $B(p, \varepsilon)\subseteq \mathcal{M} $   be the ball with radius $\varepsilon $ and center $ p$;
\item $B(\tilde{0}_{p},\varepsilon)\subseteq T_{p} \mathcal{M}$  denote the ball with radius $\varepsilon $ and center $\tilde{0}_{p}$, the zero vector in $T_{p} \mathcal{M}$;
\item $B^{s}(\tilde{0}_{p},\varepsilon)\subseteq E_{p}^{s} $  denote the ball with radius $\varepsilon $ and center $\tilde{0}_{p}$;
\item $B^{u}(\tilde{0}_{p},\varepsilon)\subseteq E_{p}^{u}$  denote the ball with radius $\varepsilon $ and center $\tilde{0}_{p}$.\end{enumerate}

\medskip

    Given two points $p,q\in \mathcal{M}$, set
\begin{align*}
\Theta_{p,q}&= \underset{n\rightarrow \infty}\limsup  \frac{1}{n}\log d(\mathcal{F}_{i}^{n}(q),\mathcal{F}_{i}^{n}(p))\quad  \text{and}\quad
 \Delta_{p,q}   =\underset{n\rightarrow \infty}\limsup\frac{1}{n}\log d(\mathcal{F}_{i}^{-n}(q),\mathcal{F}_{i}^{-n}(p)).
\end{align*}

\begin{definition}\label{conjuntosestaviesfam}  Let  $\varepsilon>0$. Fix  $p\in \mathcal{M}$.
\begin{enumerate}[\upshape (i)]
\item  \(\mathcal{W}^{s}(p,\varepsilon)=\{q\in B(p,\varepsilon):  \Theta_{p,q}<0\text{  and }\mathcal{F}_{i}^{\, n}(q)\in B(\mathcal{F}_{i}^{\, n}(p),\varepsilon)\text{ for }n\geq1\}:=\) the \textit{local stable set at} $p$;
\item  \(\mathcal{W}^{u}(p,\varepsilon)=\{q\in B(p,\varepsilon):
  \Delta_{p,q}<0\text{  and }\mathcal{F}_{i}^{-n}(q)\in B(\mathcal{F}_{i}^{-n}(p),\varepsilon)\text{ for }n\geq1\}:=\) the \textit{local unstable set at} $p$.
\end{enumerate}
\end{definition}

Since $\mathcal{F}$ satisfies the property of angles, we can suppose that  $\mathcal{F}$ is strictly Anosov and furthermore that  $E_{p}^{s}$ and $E_{p}^{u}$ are orthogonal for any $p\in\mathcal{M}$ (see \cite{JeoLS}). This is the Lemma of Mather for Anosov families.  In Theorems 5.2 and 5.3 of \cite{JeoLS} and  Theorems 3.7, 3.8, 4.5 and 4.6 of \cite{JeoSS} we proved that, for any   $ \alpha \in (0,(\lambda ^{-1}-1)/2 )$, there exist a small $\epsilon>0$  and     $\zeta\in (0,1)$    such that follow the next two results:

\begin{theorem}\label{variedadeinstave} For   each $p\in \mathcal{M}$,     $\mathcal{W}^{u}(p,\epsilon)$  is a differentiable submanifold of $\mathcal{M}$ and there exists $K^{u}>0$ such that:
 \begin{enumerate}[\upshape (i)]
 \item \(\text{exp}_{p}^{-1} (\mathcal{W} ^{u}(p,\epsilon))=\{(\phi_{p}^{u}(x),x): x\in B^{u}(\tilde{0}_{p},\epsilon) \},\) where $ \phi_{p}^{u}: B^{u}(\tilde{0}_{p},\epsilon)\rightarrow B^{s}(\tilde{0}_{p},\epsilon)$ is   an  $\alpha$-Lipschitz map and $ \phi_{p}^{u}(\tilde{0}_{p})=\tilde{0}_{p}.$
\item   $T_{p}\mathcal{W} ^{u}(p,\epsilon)=E_{p} ^{u}$,
\item $\mathcal{F}^{-1}(\mathcal{W}^{u}(p,\epsilon))\subseteq \mathcal{W}^{u}( \mathcal{F}  ^{\, -1}(p),\epsilon)$,
\item if $q\in \mathcal{W} ^{u}(p,\epsilon)$ and $n\geq1$ we have
\( d(\mathcal{F}^{-n}(q),\mathcal{F}^{-n}(p))\leq  K^{u}\zeta^{n}d(q,p).\)
\item Let $(p_{m})_{m\in \mathbb{N}}$   be a  sequence in $ M_{i}$ converging to $p\in M_{i}$ as $m\rightarrow \infty$. If  $q_{m}\in   \mathcal{W} ^{u}(p_{m},\epsilon)$ converges to $q\in B (p,\epsilon)$ as $m\rightarrow \infty$,  then $q\in \mathcal{W}^{u}(p,\epsilon)$.
\end{enumerate}
\end{theorem}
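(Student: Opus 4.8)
The plan is to construct $\mathcal{W}^{u}(p,\epsilon)$ by the Hadamard graph transform method, adapted to the non-stationary setting. First I would pass to exponential charts along the backward orbit of $p$: writing $p_{-n}=\mathcal{F}_{i}^{-n}(p)\in M_{i-n}$, I use the maps $\exp_{p_{-n}}$ to lift each $f_{i-n-1}\colon M_{i-n-1}\to M_{i-n}$ to a map $\tilde f_{-n-1}=\exp_{p_{-n}}^{-1}\circ f_{i-n-1}\circ\exp_{p_{-n-1}}$ defined near the origins of the tangent spaces. Each $\tilde f_{-n-1}$ fixes the origin, and its linearization there is $D_{p_{-n-1}}f_{i-n-1}$, which respects the splitting $E^{s}\oplus E^{u}$ and is uniformly hyperbolic by Definition \ref{anosovfamily}. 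Since $\mathcal{F}\in\mathcal{A}^{2}_{b}(\mathcal{M})$ has uniformly bounded $C^{2}$ norms and the injectivity radius is a fixed $\varrho>0$, the charts and the maps $\tilde f_{-n-1}$ are uniformly $C^{2}$, so in a ball of small radius $\epsilon$ the nonlinear remainders have arbitrarily small Lipschitz constant. Using the property of angles together with the Lemma of Mather recalled above, I may assume the family is strictly Anosov and the splitting orthogonal, which keeps all block estimates uniform in $p$.

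With this normal form I would introduce, at each $p_{-n}$, the unstable cone $\{v=(v^{s},v^{u})\in E^{s}_{p_{-n}}\oplus E^{u}_{p_{-n}}:\|v^{s}\|\le\alpha\|v^{u}\|\}$. The restriction $\alpha\in(0,(\lambda^{-1}-1)/2)$ is exactly what guarantees, after adding the small nonlinear terms, that these cones are forward invariant and that vectors inside them are expanded by a definite factor $\zeta^{-1}>1$ under $D\tilde f$. I then let $\mathcal{G}_{\alpha}$ be the space of $\alpha$-Lipschitz graphs $\phi\colon B^{u}(\tilde{0},\epsilon)\to B^{s}(\tilde{0},\epsilon)$ with $\phi(\tilde{0})=\tilde{0}$, and define the graph transform $\Gamma$ that pushes a graph over $E^{u}_{p_{-1}}$ forward by $\tilde f_{-1}$ and reads off the image as a graph over $E^{u}_{p}$. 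The cone invariance shows $\Gamma$ maps $\mathcal{G}_{\alpha}$ into itself, while the expansion in the unstable directions together with the contraction in the stable directions shows $\Gamma$ contracts the $C^{0}$ distance between graphs with factor $\le\zeta$.

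Pushing any initial graph forward from time $-n$ and letting $n\to\infty$ then produces, by this contraction, a unique limiting graph $\phi_{p}^{u}\in\mathcal{G}_{\alpha}$; its graph in the chart is $\exp_{p}^{-1}(\mathcal{W}^{u}(p,\epsilon))$, which gives (i). The fixed-point characterization of $\phi_{p}^{u}$ is precisely the statement that $\tilde f_{-1}$ carries the graph at $p_{-1}$ onto the graph at $p$, that is, (iii). For (iv) I would use that a point $q\in\mathcal{W}^{u}(p,\epsilon)$ has all backward iterates on the corresponding graphs, so the chord joining $\mathcal{F}^{-n}(q)$ and $\mathcal{F}^{-n}(p)$ stays in the invariant cone field; integrating the uniform cone contraction of $D\mathcal{F}^{-1}$ along the manifold yields $d(\mathcal{F}^{-n}(q),\mathcal{F}^{-n}(p))\le K^{u}\zeta^{n}d(q,p)$. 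Part (ii) needs the $C^{1}$ version: I would run the induced graph transform on derivatives, a fiber contraction over $\Gamma$, which converges because the $C^{2}$ bound controls its Lipschitz dependence, giving differentiability of $\phi_{p}^{u}$ and $D\phi_{p}^{u}(\tilde{0})=0$, hence $T_{p}\mathcal{W}^{u}(p,\epsilon)=E^{u}_{p}$. Finally, (v) is a continuity statement: since every estimate above is uniform in the base point, the convergence of the iterated transform to $\phi^{u}$ is uniform, so $q_{m}\in\mathcal{W}^{u}(p_{m},\epsilon)$ with $p_{m}\to p$ and $q_{m}\to q$ passes to the limit, and the defining condition $\Delta_{p,q}<0$ with backward orbit in the $\epsilon$-balls survives.

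The main obstacle is uniformity. Unlike the classical Hadamard--Perron theorem on a single manifold, here one works with infinitely many charts on possibly distinct manifolds $M_{i}$, and the whole argument collapses unless the cone apertures, the graph-transform contraction rate $\zeta$, and the constant $K^{u}$ can be chosen independently of $p\in\mathcal{M}$. This is exactly what the hypotheses defining $\mathcal{A}^{2}_{b}(\mathcal{M})$ provide: the uniform $C^{2}$ bound makes the nonlinear remainders uniformly small in the fixed-radius charts, and the s.p.a.\ (through Mather's lemma, yielding an orthogonal splitting) keeps the block decomposition and the angle-dependent constants bounded. The most delicate point is the $C^{1}$ regularity in (ii), where controlling the fiber contraction of the derivative graph transform uniformly is where the $C^{2}$ hypothesis is genuinely needed.
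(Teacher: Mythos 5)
A preliminary remark: this paper does not actually prove Theorem \ref{variedadeinstave} --- it is imported from the author's earlier work (Theorems 5.2 and 5.3 of \cite{JeoLS} and Theorems 3.7, 3.8, 4.5 and 4.6 of \cite{JeoSS}), so there is no in-paper proof to compare against. Judged against the standard proof in those references, your proposal follows essentially the same route: a Hadamard--Perron graph-transform argument in exponential charts along the backward orbit, with the hypotheses defining $\mathcal{A}^{2}_{b}(\mathcal{M})$ (uniform $C^{2}$ bounds, common injectivity radius, s.p.a.\ together with Mather's lemma giving a strictly Anosov, orthogonal splitting) supplying exactly the uniformity in the base point that the non-stationary setting demands. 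Your closing paragraph correctly identifies that uniformity, not the construction itself, is the crux.

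One step, however, is asserted rather than proved and is needed for (i) as stated. In this paper $\mathcal{W}^{u}(p,\epsilon)$ is \emph{defined} dynamically (Definition \ref{conjuntosestaviesfam}) as the set of $q\in B(p,\epsilon)$ with $\Delta_{p,q}<0$ whose backward orbit stays in the $\epsilon$-balls, so after constructing the limiting graph of $\phi^{u}_{p}$ you owe two inclusions. The inclusion of the graph into $\mathcal{W}^{u}(p,\epsilon)$ does follow from your cone and contraction estimates (they yield exponential backward convergence along the graph, i.e.\ your item (iv)). The converse --- that every point of the dynamically defined set lies on the graph --- is not addressed: it is the uniqueness half of Hadamard--Perron, and it requires showing that for $q$ off the graph the deviation from the graph in the stable direction is \emph{expanded} under backward iteration (since $D\mathcal{F}^{-n}$ expands $E^{s}$), which forces $\mathcal{F}_{i}^{-n}(q)$ out of $B(\mathcal{F}_{i}^{-n}(p),\epsilon)$ for some $n$, contradicting the defining condition. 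This is exactly the mechanism used in Proposition \ref{dasddd} of the paper, so the gap is repairable with the tools you already have, but as written the equality in (i) is incomplete.
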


 \begin{theorem}\label{variedadeestavel}   For   each $p\in \mathcal{M}$,     $\mathcal{W}^{s}(p,\epsilon)$  is a differentiable submanifold of $\mathcal{M}$ and there exists $K^{s}>0$ such that:
\begin{enumerate}[\upshape (i)]
\item \(\text{exp}_{p}^{-1} (\mathcal{W} ^{s}(p,\epsilon))=\{(x,\phi_{p}^{s}(x)): x\in B^{s}(\tilde{0}_{p},\epsilon) \},\)
where $\phi_{p}^{s}: B^{s}(\tilde{0}_{p},\epsilon)\rightarrow B^{u}(\tilde{0}_{p},\epsilon)$ is an $\alpha$-Lipschitz map and $\phi_{p}^{s}(\tilde{0}_{p})=\tilde{0}_{p}.$
\item   $T_{p}\mathcal{W}^{s}(p,\epsilon)=E_{p} ^{s}$,
\item $\mathcal{F}  (\mathcal{W} ^{s}(p,\epsilon))\subseteq \mathcal{W}^{s}(\mathcal{F}(p),\epsilon)$,
\item if $q\in \mathcal{W} ^{s}(p,\epsilon)$ and $n\geq1$ we have
\(d(\mathcal{F}^{\, n}(q),\mathcal{F}^{\, n}(p))\leq  K^{s}\zeta^{n}d(q,p).
\)
\item Let $(p_{m})_{m\in \mathbb{N}}$   be a  sequence in $ M_{i}$ converging to $p\in M_{i}$ as $m\rightarrow \infty$. If  $q_{m}\in   \mathcal{W} ^{s}(p_{m},\epsilon)$ converges to $q\in B (p,\epsilon)$ as $m\rightarrow \infty$,  as $q\in \mathcal{W}^{s}(p,\epsilon)$.
\end{enumerate}
\end{theorem}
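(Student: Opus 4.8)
The plan is to realise the local stable set $\mathcal{W}^{s}(p,\epsilon)$ as the fixed point of a graph transform acting simultaneously over the whole family, i.e.\ the Hadamard--Perron method adapted to the non-autonomous setting (Theorem \ref{variedadeinstave} is the dual statement, obtained by the same method applied to the inverse family). By the Mather lemma quoted above I may assume $\mathcal{F}$ is strictly Anosov with $E^{s}_{p}\perp E^{u}_{p}$, so that in the chart $\exp_{p}$ the splitting is orthogonal and the transversality furnished by the property of angles is uniform.

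First I would pass to charts. For $p\in M_{i}$ put $\tilde f_{p}=\exp_{f_{i}(p)}^{-1}\circ f_{i}\circ\exp_{p}$ on $B(\tilde 0_{p},\epsilon)\subseteq T_{p}\mathcal{M}$; then $\tilde f_{p}(\tilde 0_{p})=\tilde 0_{f_{i}(p)}$ and $D\tilde f_{p}(\tilde 0_{p})=D_{p}f_{i}$. Since $\mathcal{F}\in\mathcal{A}^{2}_{b}(\mathcal{M})$, the second derivatives are bounded uniformly in $p$ and $i$, so on the ball of radius $\epsilon$ the nonlinear remainder $\tilde f_{p}-D_{p}f_{i}$ has Lipschitz constant $\leq C\epsilon$ with $C$ independent of $p,i$; shrinking $\epsilon$ makes this negligible against the hyperbolicity gap measured by $\lambda$. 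The whole point of this step is that every constant is uniform in the base point and in the time index, which is exactly what the hypotheses defining $\mathcal{A}^{2}_{b}(\mathcal{M})$ are there to guarantee.

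Next I would set up the transform. Let $\Gamma$ be the space of families $\sigma=(\sigma_{p})_{p\in\mathcal{M}}$ with each $\sigma_{p}\colon B^{s}(\tilde 0_{p},\epsilon)\to B^{u}(\tilde 0_{p},\epsilon)$ being $\alpha$-Lipschitz and $\sigma_{p}(\tilde 0_{p})=\tilde 0_{p}$, endowed with the sup metric. Define $(\mathcal{T}\sigma)_{p}$ to be the graph over $E^{s}_{p}$ whose $\tilde f_{p}$-image is $\mathrm{graph}(\sigma_{f_{i}(p)})$; because $f$ contracts $E^{s}$ and expands $E^{u}$, the inverse $\tilde f_{p}^{-1}$ flattens graphs over the stable direction, so for $\alpha\in(0,(\lambda^{-1}-1)/2)$ and $\epsilon$ small $\mathcal{T}$ preserves the $\alpha$-Lipschitz condition and is a contraction of the sup metric, with factor some $\zeta\in(0,1)$ depending only on $\lambda,\alpha$. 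Its unique fixed point $\phi^{s}=(\phi^{s}_{p})$ gives the graphs in (i); fixed-point invariance is precisely the inclusion (iii); and reading the contraction factor along orbits yields the exponential estimate (iv), with $K^{s}$ absorbing the distortion between the chart and $\mathcal{M}$.

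It remains to pass to $C^{1}$ and to identify the tangent space. Statement (ii), $T_{p}\mathcal{W}^{s}(p,\epsilon)=E^{s}_{p}$, reduces to $D\phi^{s}_{p}(\tilde 0_{p})=0$, which I would obtain by running a second, fiberwise contraction on the bundle of candidate tangent planes --- equivalently by invoking the fiber-contraction theorem for the derivative cocycle --- and this is where the $C^{2}$ bound is genuinely needed. Property (v) then follows from the uniform convergence $\mathcal{T}^{n}\sigma\to\phi^{s}$ together with continuity of $p\mapsto E^{s}_{p},E^{u}_{p}$: if $p_{m}\to p$ and $q_{m}\in\mathcal{W}^{s}(p_{m},\epsilon)$ with $q_{m}\to q$, then $q$ lies on the limit of the graphs, which is $\mathrm{graph}(\phi^{s}_{p})$ by uniqueness. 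I expect the main obstacle to be the uniformity of all estimates across the bi-infinite family: one must check that $\epsilon,\alpha,\zeta,K^{s}$ can be chosen independently of both the base point and the index $i$, and it is exactly the package in $\mathcal{A}^{2}_{b}(\mathcal{M})$ --- uniform hyperbolicity, the property of angles, and $\sup_{i}\|Df_{i}\|_{C^{2}}<\infty$ --- that secures this; the $C^{1}$ regularity step is the most delicate part.
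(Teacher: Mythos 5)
There is no in-paper argument to compare you against: the paper does not prove Theorem \ref{variedadeestavel} (or its twin, Theorem \ref{variedadeinstave}) at all, but imports both from the author's earlier works --- Theorems 5.2 and 5.3 of \cite{JeoLS} and Theorems 3.7, 3.8, 4.5 and 4.6 of \cite{JeoSS} --- after invoking the Mather lemma to reduce to a strictly Anosov family with orthogonal splitting. That said, your Hadamard--Perron reconstruction is the standard method for exactly this setting, and it is visibly the method behind the cited statements: the reduction to orthogonal splittings, the $\alpha$-Lipschitz graphs with $\alpha\in(0,(\lambda^{-1}-1)/2)$, and the uniform constants $\epsilon$, $\zeta$, $K^{s}$ are all fingerprints of a graph transform run simultaneously over the disjoint union $\mathcal{M}$, with the sup metric over all base points substituting for the compactness that $\mathcal{M}$ lacks. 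Your identification of where $\mathcal{A}^{2}_{b}(\mathcal{M})$ enters (uniform $C^{2}$ bounds giving chart remainders of Lipschitz constant $O(\epsilon)$ uniformly in $p$ and $i$, the angle property giving uniform transversality) and your packaging of (ii)--(v) (fiber contraction for tangency, fixed-point invariance for (iii), the contraction rate for (iv), uniform limits of continuous graph families for (v)) are correct in outline.

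The one genuine gap is in (i). In this paper $\mathcal{W}^{s}(p,\epsilon)$ is not \emph{defined} as the object you construct; it is the dynamically defined set of Definition \ref{conjuntosestaviesfam}, namely the points $q\in B(p,\epsilon)$ whose forward orbit stays $\epsilon$-close to that of $p$ and satisfies $\Theta_{p,q}<0$. Your fixed point $\phi^{s}_{p}$ produces an invariant Lipschitz graph, and your estimate (iv) yields one inclusion (every point of the graph has exponentially converging forward orbit, hence lies in $\mathcal{W}^{s}(p,\epsilon)$); but the equality claimed in (i) also requires the converse: every point whose forward orbit stays $\epsilon$-close to that of $p$ must lie on the graph. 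This needs a separation argument --- a point of $B(p,\epsilon)$ off the graph has, in the chart, a nonzero component transverse to the graph which the dynamics expands at a definite rate, so its orbit eventually leaves the $\epsilon$-neighborhood --- which is precisely the mechanism the paper itself uses to prove Proposition \ref{dasddd}. Without it you have built \emph{a} stable manifold but not identified it with the set named in the theorem. A smaller technical point: your transform is not well posed as stated, since the $\tilde f_{p}$-image of a graph over the full ball $B^{s}(\tilde 0_{p},\epsilon)$ is a graph over a strictly smaller ball (the stable direction is contracted), so no graph has $\tilde f_{p}$-image \emph{equal} to $\mathrm{graph}(\sigma_{\mathcal{F}(p)})$; one should instead define $(\mathcal{T}\sigma)_{p}$ as the graph over $B^{s}(\tilde 0_{p},\epsilon)$ contained in $\tilde f_{p}^{-1}\bigl(\mathrm{graph}(\sigma_{\mathcal{F}(p)})\bigr)$, checking that this preimage does project onto the whole stable ball.
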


 Other property of the invariant manifolds for Anosov families follows in the proposition below.

\begin{proposition}\label{dasddd}
 Let $\beta \in (0,\epsilon/2)$. If $d(\mathcal{F}^{n}(p),\mathcal{F}^{n}(q))<\beta $ for all $n \in \mathbb{N}$, then $ q\in \mathcal{W}^{s}(p,\epsilon)$. On the other hand, if $d(\mathcal{F}^{-n}(p),\mathcal{F}^{-n}(q))<\beta $ for all $n \in \mathbb{N}$, then $ q\in \mathcal{W}^{u}(p,\epsilon)$.
\end{proposition}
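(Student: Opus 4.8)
The plan is to prove the first assertion only; the second follows by running the entire argument for the inverse family $(f_i^{-1})_{i\in\mathbb{Z}}$, which interchanges the roles of $\mathcal{W}^s$ and $\mathcal{W}^u$ and of Theorems \ref{variedadeinstave} and \ref{variedadeestavel}. As recorded above the statement of those theorems, the property of angles lets me assume that $\mathcal{F}$ is strictly Anosov and that $E^s_p\perp E^u_p$ for every $p$; in particular I will use the one-step estimates $\|D\mathcal{F} v\|\le\lambda\|v\|$ for $v\in E^s_p$ and $\|D\mathcal{F} v\|\ge\lambda^{-1}\|v\|$ for $v\in E^u_p$. Since $\beta<\epsilon/2<\epsilon$, the hypothesis already yields $q\in B(p,\epsilon)$ and $\mathcal{F}^n(q)\in B(\mathcal{F}^n(p),\epsilon)$ for every $n$, so the whole content of the statement is the \emph{strict} inequality $\Theta_{p,q}<0$: I must upgrade ``$q$ stays within $\beta$ of $p$ forever'' to genuine exponential contraction.

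The core of the proof is a cone estimate on the connecting vectors. I would set $w_n=\exp_{\mathcal{F}^n(p)}^{-1}(\mathcal{F}^n(q))$, so that $\|w_n\|\le C\,d(\mathcal{F}^n(p),\mathcal{F}^n(q))<C\beta$ for a uniform metric constant $C$, and split $w_n=w_n^s+w_n^u$ in the orthogonal decomposition. These vectors evolve through the chart representatives $\tilde f_n=\exp_{\mathcal{F}^{n+1}(p)}^{-1}\circ\mathcal{F}\circ\exp_{\mathcal{F}^n(p)}$, which fix the origin, and the mean-value identity $w_{n+1}=\tilde f_n(w_n)=L_n w_n$ with $L_n=\int_0^1 D\tilde f_n(tw_n)\,dt$ makes the evolution \emph{linear} through the secant operators $L_n$. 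Because $D\tilde f_n(0)=D_{\mathcal{F}^n(p)}\mathcal{F}$ obeys the one-step splitting estimates, and because $\sup_i\|Df_i\|_{C^2}<\infty$ together with the bounded geometry of the charts bounds $\|D^2\tilde f_n\|$ uniformly, one gets $\|L_n-D\tilde f_n(0)\|\le\tfrac12\|D^2\tilde f_n\|\,\|w_n\|\le\delta$, with $\delta$ as small as I wish once $\epsilon$ (hence $\beta$) is small enough — which I may assume, shrinking the $\epsilon$ furnished by the manifold theorems if necessary. Thus each $L_n$ is a uniformly small perturbation of a $\lambda$-hyperbolic operator, so the orthogonal cone field $C^s=\{\|v^u\|\le\|v^s\|\}$, $C^u=\{\|v^u\|\ge\|v^s\|\}$ is invariant with degraded rates: on $C^u$ one has $\|(L_n v)^u\|\ge(\lambda^{-1}-\sqrt2\,\delta)\|v^u\|$, and vectors staying in $C^s$ satisfy $\|(L_n v)^s\|\le(\lambda+\sqrt2\,\delta)\|v^s\|$.

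The conclusion then follows in two moves. Fix any $m\ge0$: the forward orbit from $\mathcal{F}^m(p),\mathcal{F}^m(q)$ also stays $\beta$-close, so if $w_m\in\operatorname{int}C^u$ with $w_m\neq0$, cone invariance keeps $w_{m+k}\in\operatorname{int}C^u$ and the expansion above forces $\|w_{m+k}\|\ge\|w_{m+k}^u\|\ge(\lambda^{-1}-\sqrt2\,\delta)^k\|w_m^u\|\to\infty$, contradicting $\|w_{m+k}\|<C\beta$; and $w_m=0$ would give $q=p$. Hence $w_m\in C^s$ for every $m$. Writing $s_m=\|w_m^s\|$, the contraction estimate gives $s_{m+1}\le(\lambda+\sqrt2\,\delta)s_m$, so $s_m\le\nu^m s_0$ with $\nu:=\lambda+\sqrt2\,\delta<1$; since $\|w_m\|\le\sqrt2\,s_m$, the distances $d(\mathcal{F}^n(q),\mathcal{F}^n(p))$ decay like $\nu^n$ and therefore $\Theta_{p,q}\le\log\nu<0$, i.e. $q\in\mathcal{W}^s(p,\epsilon)$.

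The main obstacle I anticipate is precisely the nonlinear bookkeeping of the second paragraph: the $w_n$ do not transform under $D\mathcal{F}$ but under the secant maps $L_n$, so everything rests on quantifying, uniformly in $n$ and using only $\sup_i\|Df_i\|_{C^2}<\infty$, how far $L_n$ departs from $D_{\mathcal{F}^n(p)}\mathcal{F}$, and on checking that cone invariance and the modified rates $\lambda^{-1}-\sqrt2\,\delta>1$ and $\lambda+\sqrt2\,\delta<1$ survive this departure for one fixed small $\epsilon$. The strict-Anosov and orthogonality normalizations are exactly what supply the one-step hyperbolicity needed to keep these perturbative estimates clean.
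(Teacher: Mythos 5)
Your proof is correct, but it takes a genuinely different route from the paper's. The paper's own argument stays at the level of the nonlinear invariant manifolds: it identifies (via exponential charts, ``by abuse of notation'') a neighborhood of $\mathcal{F}_0^{n}(p)$ with $\mathcal{W}^{s}(\mathcal{F}_0^{n}(p),\epsilon)\times\mathcal{W}^{u}(\mathcal{F}_0^{n}(p),\epsilon)$, writes the orbit of $q$ in these product coordinates as $(x_n,y_n)$, assumes $q\notin\mathcal{W}^{s}(p,\epsilon)$ (so $y_0\neq 0$), and then quotes items (iv) of Theorems \ref{variedadeinstave} and \ref{variedadeestavel}: the unstable coordinate grows like $(K^{u}\zeta^{n})^{-1}\Vert y_0\Vert$ while the stable one decays like $K^{s}\zeta^{n}\Vert x_0\Vert$, so eventually $d(\mathcal{F}_0^{n}(q),\mathcal{F}_0^{n}(p))>\beta$, a contradiction. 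You instead decompose the connecting vector $w_n=\exp^{-1}_{\mathcal{F}^{n}(p)}(\mathcal{F}^{n}(q))$ along the linear splitting $E^{s}\oplus E^{u}$ and re-derive the needed expansion and contraction from scratch, via secant operators $L_n$ and an invariant cone field --- essentially redoing a slice of the Hadamard--Perron estimates rather than citing their output. What the paper's route buys is brevity: all the hard analysis (uniform chart distortion, $C^{2}$ bounds) is already encapsulated in the two manifold theorems, so no quantitative perturbation estimates are needed. What your route buys is self-containedness and an explicit rate: you get $\Theta_{p,q}\le\log(\lambda+\sqrt{2}\,\delta)$, you never invoke the local product structure (indeed your dichotomy ``$w_m\in C^{s}$ for all $m$'' is a weak form of it), and the same cone bookkeeping immediately handles the unstable case under the inverse family. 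The one point where you lean on something not stated in this paper is the uniform bound on $\Vert D^{2}\tilde f_n\Vert$, which needs bounded geometry of the exponential charts on top of $\sup_i\Vert Df_i\Vert_{C^{2}}<\infty$; but this is the same kind of hidden uniformity the paper delegates to \cite{JeoLS,JeoSS}, so it is a comparable, not a larger, debt. Incidentally, your orientation of the estimates is the correct one: the paper's displayed inequality $\Vert(x_n,y_n)\Vert\ge\Vert x_n\Vert-\Vert y_n\Vert\ge(K^{u}\zeta^{n})^{-1}\Vert y_0\Vert-K^{s}\zeta^{n}\Vert x_0\Vert$ has the roles of $x_n$ and $y_n$ transposed in the middle term, which should read $\Vert y_n\Vert-\Vert x_n\Vert$.
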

\begin{proof}
By abuse of notation, we identify $\mathcal{W}^{s}( \mathcal{F}_{0} ^{n} (p),\epsilon )\times \mathcal{W}^{u}(\mathcal{F}_{0} ^{n}(p),\epsilon)$ with an open neighborhood  of $\tilde{0}\in T_{\mathcal{F}_{0} ^{n}(p)}\mathcal{M}$ via exponential charts.   Suppose that $q\notin  \mathcal{W}^{s}(p,\epsilon)$. Therefore, since $\mathcal{F}_{0} ^{n}(q)\in B  (\mathcal{F}_{0} ^{n}(p),\beta)$, we have     \[\mathcal{F}_{0} ^{n}  (\text{exp}_{p} ^{-1}(q) )=(x_{n},y_{n})\in \mathcal{W}^{s}( \mathcal{F}_{0} ^{n} (p),\epsilon )\times \mathcal{W}^{u}(\mathcal{F}_{0} ^{n}(p),\epsilon),  \]
 for all $n\geq0,$ with $x_{n}\in \mathcal{W}^{s}( \mathcal{F}_{0} ^{n}  (p)  ,\epsilon) $ and $y_{n} \in \mathcal{W}^{u}(  \mathcal{F}_{0} ^{n}(p) ,\epsilon )\setminus\{0\}$. We can obtain from Theorem \ref{variedadeinstave}, item (iv), and   Theorem \ref{variedadeestavel}, item (iv),  that   \begin{equation*}\Vert (x_{n}, y_{n})\Vert    \geq  \Vert x_{n}\Vert -  \Vert y_{n}\Vert  \geq \frac{1}{K^{u}\zeta^{n}}   \Vert y_{0}\Vert  - {K^{s}}{\zeta^{n}}   \Vert x_{0}\Vert.
\end{equation*}
We have  $ {K^{s}}{\zeta^{n}}   \Vert x_{0}\Vert \rightarrow 0$ as $n\rightarrow +\infty.$  Since $y_{0}\neq 0$,  for some $n\in\mathbb{N}$ we have
\(d(\mathcal{F}_{0} ^{n}(q),\mathcal{F}_{0} ^{n}(p))=\Vert (x_{n},  y_{n})\Vert   >\beta,\)
which contradicts the assumption.
Analogously we can prove the second part of the proposition.
\end{proof}

\section{Canonical Coordinates for Anosov Families} \label{SectionCanonical}

Canonical coordinates were introduced by
Bowen who used them to study Axiom A diffeomorphisms \cite{Z01,Z02,Z03}. He exploited
the fact that an Axiom A diffeomorphism restricted to a basic set has hyperbolic
canonical coordinates with respect to some metric. Other results related to canonical coordinates have been developed, such as Fathi's, which  says that an expansive homeomorphism on a compact metric space with canonical coordinates admits a metric compatible with the original topology to which the canonical coordinates are hyperbolic \cite{Fathi}. In our case, we will prove that Anosov families in $\mathcal{A}^{2}_{b}(\mathcal{M})$   admit canonical coordinates.

 \begin{definition} \label{Canonical} An Anosov family  $\mathcal{F}$ has  \emph{canonical coordinates} if  given a small $\epsilon>0$ there exists a $\delta>0$ such that, if $p,q\in \mathcal{M}$ with $d(p,q)<\delta$, then $$\mathcal{W}^{s}(p,\epsilon)\cap \mathcal{W}^{u}(q,\epsilon) \neq \emptyset.$$
 \end{definition}

In the single case,  the existence of canonical coordinates for Anosov families is a direct consequence of the continuity of the stable and unstable manifolds and the compactness of the manifold. However, for Anosov families, this fact is not immediate, because $ \mathcal{M}$  is not compact.

\begin{theorema}\label{coordenadas} Let $\mathcal{F}\in\mathcal{A}^{2}_{b}(\mathcal{M})$. Given a small $\epsilon>0$ there exists $\delta>0$ such that if $p,q \in \mathcal{M}$, and $d(p,q)<\delta$ then  $$\mathcal{W}^{s}(p,\epsilon)\cap \mathcal{W}^{u}(q,\epsilon)  $$ is a single point in $\mathcal{M}. $\end{theorema}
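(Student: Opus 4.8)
The plan is to work inside a single exponential chart at $p$ and to realize the intersection point as the unique fixed point of a contraction, exactly as in the classical compact case, the whole difficulty being to make every estimate uniform over the non-compact space $\mathcal{M}$. First I would reduce to the case $p,q\in M_i$ for a common $i$: the sets $\mathcal{W}^{s}(p,\epsilon)\subseteq B(p,\epsilon)$ and $\mathcal{W}^{u}(q,\epsilon)\subseteq B(q,\epsilon)$ lie in the components carrying $p$ and $q$, so for $\delta$ smaller than the injectivity radius $\varrho$ the condition $d(p,q)<\delta$ places both points on the same manifold $M_i$. I then work in the chart $\exp_p^{-1}\colon B(p,\varrho)\to T_pM_i=E^{s}_p\oplus E^{u}_p$, which is an orthogonal splitting by the Lemma of Mather quoted before Theorem \ref{variedadeinstave}.

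Next I would write both invariant sets as graphs in this chart. By Theorem \ref{variedadeestavel}(i) the image $\exp_p^{-1}(\mathcal{W}^{s}(p,\epsilon))$ is the graph $\{(a,\phi^{s}_p(a)):a\in B^{s}(\tilde{0}_{p},\epsilon)\}$ of the $\alpha$-Lipschitz map $\phi^{s}_p$ with $\phi^{s}_p(\tilde{0}_{p})=\tilde{0}_{p}$. For the unstable set I would transport the graph $\exp_q^{-1}(\mathcal{W}^{u}(q,\epsilon))$ of Theorem \ref{variedadeinstave}(i) into the chart at $p$ via the transition map $\tau=\exp_p^{-1}\circ\exp_q$. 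Here $\tau(\tilde{0}_{q})=\exp_p^{-1}(q)$ has norm $d(p,q)<\delta$, and $D\tau(\tilde{0}_{q})$ differs from parallel transport along the short geodesic from $q$ to $p$ by an amount controlled by the curvature and by $d(p,q)$. Using the property of angles (Definition \ref{propang2}) the splitting $E^{s}\oplus E^{u}$ is uniformly transversal, and the bound $\sup_{i}\|Df_i\|_{C^2}<\infty$ defining $\mathcal{A}^{2}_{b}(\mathcal{M})$ controls the charts and the Lipschitz data uniformly; hence for $\delta$ small, independently of $p,q$, the transported set $\tau\bigl(\exp_q^{-1}(\mathcal{W}^{u}(q,\epsilon))\bigr)$ is again a graph $\{(\Phi(b),b):b\in D\}$ over $E^{u}_p$ of a map $\Phi$ with Lipschitz constant $\alpha'$ as close to $\alpha$ as desired, over a domain $D\supseteq B^{u}(\tilde{0}_{p},\epsilon/2)$.

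With both sets realized as transversal Lipschitz graphs in the same chart, the intersection is found by a contraction argument: a point $(a,b)$ lies in both precisely when $b=\phi^{s}_p(a)$ and $a=\Phi(b)$, i.e. $a=\Phi(\phi^{s}_p(a))$. Since $\alpha\in(0,(\lambda^{-1}-1)/2)$ may be taken small and $\alpha'$ is close to $\alpha$, the self-map $a\mapsto\Phi(\phi^{s}_p(a))$ is a contraction of constant $\alpha\alpha'<1$ on the relevant ball, so it has a unique fixed point $a^{\ast}$; the point $\exp_p\bigl(a^{\ast},\phi^{s}_p(a^{\ast})\bigr)$ is then the unique element of $\mathcal{W}^{s}(p,\epsilon)\cap\mathcal{W}^{u}(q,\epsilon)$. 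Existence and uniqueness come out of this single fixed-point statement, so no appeal to expansiveness (not yet available at this stage) is needed; one only checks that $a^{\ast}$ stays in the admissible ball of radius $\epsilon$, which holds once $\delta$ is small.

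The main obstacle is precisely this uniformity over the non-compact $\mathcal{M}$. In the compact setting a suitable $\delta$ follows at once from continuity of $p\mapsto(E^{s}_p,E^{u}_p,\phi^{s}_p,\phi^{u}_p)$ together with compactness, but here compactness is gone, so I expect the delicate point to be establishing that a \emph{single} $\delta$ works for all pairs $p,q$. This amounts to showing that the transition maps $\exp_p^{-1}\circ\exp_q$ and the splittings vary uniformly continuously in $p$, which is exactly what the defining hypotheses of $\mathcal{A}^{2}_{b}(\mathcal{M})$ — the property of angles and the uniform $C^{2}$ bound on the derivatives — are designed to guarantee, via the uniform constants $K^{s},K^{u},\zeta$ and the common size $\epsilon$ furnished by the constructions in \cite{JeoLS,JeoSS}.
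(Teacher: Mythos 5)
Your proposal is correct, and its skeleton coincides with the paper's: both proofs work in the exponential chart at $p$, invoke the Mather-lemma reduction making $E^{s}_{p}\perp E^{u}_{p}$, represent $\mathcal{W}^{s}(p,\epsilon)$ and $\mathcal{W}^{u}(q,\epsilon)$ via the $\alpha$-Lipschitz graphs of Theorems \ref{variedadeestavel} and \ref{variedadeinstave}, transport the unstable data of $q$ into the chart at $p$, and extract a uniform $\delta$ from the hypotheses defining $\mathcal{A}^{2}_{b}(\mathcal{M})$ (s.p.a.\ and the uniform $C^{2}$ bound). Where you part ways is the finishing device. The paper argues with cones: it translates the cone $K^{u}_{\alpha,q}$ by the affine map $z+D(\exp_{p}^{-1})_{q}(\cdot)$, asserts that the transported subspaces are \emph{parallel} to $E^{s}_{p}$ and $E^{u}_{p}$, and reads off from this picture that the transported cone crosses the stable graph of $p$ in a single point of $B^{s}(\tilde{0}_{p},\epsilon)\times B^{u}(\tilde{0}_{p},\epsilon)$. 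You instead rewrite the transported unstable set as a Lipschitz graph $\{(\Phi(b),b)\}$ over $E^{u}_{p}$ and obtain the intersection as the unique fixed point of the contraction $a\mapsto\Phi(\phi^{s}_{p}(a))$ with constant $\alpha\alpha'<1$. Each buys something: the paper's cone argument is shorter, but its parallelism claim is only approximately true (the derivative of the transition $\exp_{p}^{-1}\circ\exp_{q}$ differs from parallel transport by a curvature-and-$d(p,q)$ error), and both existence and uniqueness of the crossing point are left to the figure; your fixed-point formulation delivers existence and uniqueness in one stroke and makes that error term explicit, at the price of having to prove the graph-transform step (that the transported set really is an $\alpha'$-Lipschitz graph over a domain containing $B^{u}(\tilde{0}_{p},\epsilon/2)$) --- which is precisely the same uniform-geometry estimate that the paper's parallelism assertion glosses over.
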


\begin{proof}
  Take $\varepsilon\in (0, \varrho)$, where $\varrho$ is an injectivity radius of $\mathcal{M}$. Hence,  for any $p\in\mathcal{M}$  the exponential map \[\text{exp}_{p}:B(\tilde{0}_{p},\varepsilon) \rightarrow B(p,\varepsilon) \] is a diffeomorphism and \( \Vert v\Vert  =d(\text{exp}_{p}(v), p)),\)  for all  \(v\in B(\tilde{0}_{p},\varepsilon).\)
  By Theorems \ref{variedadeinstave} and \ref{variedadeestavel}  we have that for any   $ \alpha \in (0,(\lambda ^{-1}-1)/2 )$ there exists an $\epsilon\in (0, \varrho/4)$ such that
\[\text{exp}_{p}^{-1} (\mathcal{W} ^{s}(p,\epsilon))=\{(x,\phi_{p}^{s}(x)): x\in B^{s}(\tilde{0}_{p},\epsilon) \} \] and\[\text{exp}_{p}^{-1} (\mathcal{W} ^{u}(p,\epsilon))=\{(\phi_{p}^{u}(x),x): x\in B^{u}(\tilde{0}_{p},\epsilon) \},\]
where $\phi_{p}^{s}: B^{s}(\tilde{0}_{p},\epsilon)\rightarrow B^{u}(\tilde{0}_{p},\epsilon)$ and $\phi_{p}^{u}: B^{u}(\tilde{0}_{p},\epsilon)\rightarrow B^{s}(\tilde{0}_{p},\epsilon)$ are $\alpha$-Lipschitz maps and $\phi_{p}^{s}(\tilde{0}_{p})=\phi_{p}^{u}(\tilde{0}_{p})=\tilde{0}_{p}.$ Set \begin{align*}K_{\alpha,p}^{s} =\{(v,w)\in E_{p}^{s}\oplus E_{p}^{u}: \Vert w\Vert \leq \alpha \Vert v \Vert \}
\quad \text{ and }\quad K_{\alpha ,p}^{u} =\{(v,w)\in E_{p}^{s}\oplus E_{p}^{u}: \Vert v\Vert\leq \alpha \Vert w\Vert \}  .
\end{align*}
Note that \[\text{exp}_{p}^{-1} (\mathcal{W} ^{s}(p,\epsilon))\subseteq K_{\alpha,p}^{s} \quad \text{and}\quad \text{exp}_{p}^{-1} (\mathcal{W} ^{u}(p,\epsilon))\subseteq K_{\alpha,p}^{u}.\]
Take $p,q\in\mathcal{M}$ with $d(p,q)<\epsilon/4$. Thus $\text{exp}_{p}^{-1}(q)\in B^{s}(\tilde{0}_{p},\epsilon)\times B^{u}(\tilde{0}_{p},\epsilon)$. Set $z=\text{exp}_{p}^{-1}(q)$, \[ \tilde{E}^{s}_{q} :=z+D (\text{exp}_{p}^{-1})_{q} (E_{q}^{s})\subseteq T_{p}\mathcal{M}\quad \text{and}\quad \tilde{E}^{u}_{q}:=z+D(\text{exp}_{p}^{-1}) _{q}(E_{q}^{u})\subseteq T_{p}\mathcal{M}.\]
Thus $\tilde{E}^{s}_{q}$  is parallel to   $E^{s}_{p}$    and  $\tilde{E}^{u}_{q}$ is parallel to   $E^{u}_{p}$.  Hence  $\tilde{E}^{s}_{q}$  is perpendicular to $E^{u}_{p}$ and $\tilde{E}^{u}_{q}$ is perpendicular to $E^{s}_{p}$    (remember that $E^{s}_{p}$    and $E^{u}_{p}$    are orthogonal).   Consequently, we can choose a  $\delta\in(0,\epsilon/4)$ small enough such that,  if $d(p,q)<\delta$, then  any $(u_{q},v_{q})\in \text{exp}_{p}^{-1}(q)+D (\text{exp}_{p}^{-1})_{q}( K_{\alpha,q}^{u})$ with $u_{q}\in B^{u}(\tilde{0}_{p},\epsilon)$ belongs to $B^{s}(\tilde{0}_{p},\epsilon)\times B^{u}(\tilde{0}_{p},\epsilon)$ (see Figure \ref{figure1}). 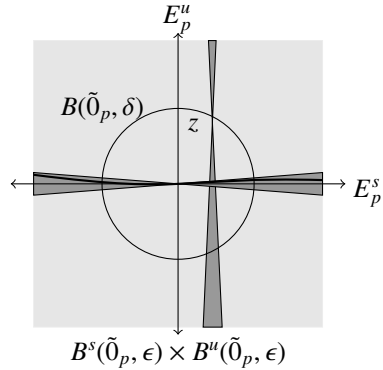
\begin{figure}[ht]
 \begin{center}
\begin{tikzpicture}
\draw[black!7,fill=black!10, ultra thin] (-7.9,-0.4)rectangle (-4.1,3.4);

\draw[black, fill=black!40, thin] (-5.6,3.4) -- (-5.5,3.4) -- (-5.67,-0.4) --   (-5.42,-0.4) -- (-5.6,3.4) -- cycle;

 \draw[black, fill=black!40, thin]  (-7.9,1.65) -- (-7.9,1.35) -- (-4.1,1.65) --  (-4.1,1.35) --  (-7.9,1.65) --  cycle;

 \draw (-6 , 1.5) circle (1cm);

\draw[<->] (-6,-0.5) -- (-6,3.5);
\draw[<->] (-8.2,1.5) -- (-3.8,1.5);
\draw (-6.2,4) node[below] {\quad{\small $E_{p}^{u}$}};
\draw (-3.7,1.7) node[below] {\quad{\small $E_{p}^{s}$}};
\draw (-6,-0.4) node[below] {\small $B^{s}(\tilde{0}_{p},\epsilon)\times B^{u}(\tilde{0}_{p},\epsilon)$};
\draw (-5.8,2.5) node[below] {\small $z$};

 \draw (-7,2.85) node[below] {\small $B(\tilde{0}_{p},\delta)$};
\draw[thick] (-7.9,1.62) .. controls (-5.8,1.35) and (-6,1.6) .. (-4.1,1.55);
\end{tikzpicture}
\end{center}
\caption{$z=\text{exp}_{p}^{-1}(q) $; the vertical cone is $\text{exp}_{p}^{-1}(q)+D (\text{exp}_{p}^{-1})_{q}( K_{\alpha,q}^{u})$;  the horizontal cone is $K_{\alpha,p}^{u}$; the curve inside $K_{\alpha,p}^{u}$ is $\mathcal{W} ^{s}(p,\epsilon)$.} \label{figure1}
\end{figure}

Therefore   $$[\text{exp}_{p}^{-1}(q)+D (\text{exp}_{p}^{-1})_{q}( K_{\alpha,q}^{u})]\cap  K_{\alpha,p}^{u}$$ is not empty and lives inside $B^{s}(\tilde{0}_{p},\epsilon)\times B^{u}(\tilde{0}_{p},\epsilon)$.
Since $ \text{exp}_{q}^{-1} (\mathcal{W} ^{u}(q,\epsilon))\subseteq K_{\alpha,q}^{u}$,  we have that $\mathcal{W} ^{s}(p,\epsilon)\cap \mathcal{W} ^{u}(q,\epsilon)$ is a single point in $\mathcal{M}.$  \end{proof}





 \section{Expansiveness and Shadowing}\label{SectionShadowing}

This section is divided into two subsections. In the first subsection we will verify  that the Anosov families in $\mathcal{A}^{2}_{b}(\mathcal{M})$ are expansive. In the second subsection we will use the main results from the previous sections, that is,  Anosov families in $\mathcal{A}^{2}_{b}(\mathcal{M})$ have canonical coordinates and are expansive, to prove  Shadowing Lemma for Anosov families.

\subsection{Expansiveness}

The notion of expansiveness was introduced by Bowen \cite{Bowen2} and a sequence of studies occurred relating expansivity and hyperbolicity. Here, we will appropriately define expansiveness for non-stationary dynamical systems.

 \begin{definition} A non-autonomous dynamical system $\mathcal{F}  $  is \emph{expansive} if there exists $\delta>0$ such that for any distinct points $p,q\in \mathcal{M}$ there is some $n \in\mathbb{Z}$ such that $$d( \mathcal{F}^{n}_{0} (p), \mathcal{F}^{n}_{0} (q)) \geq \delta.$$
 \end{definition}

In \cite{JeoSS}, Proposition 7.5, is proved the following result:

\begin{proposition}\label{wer} For any $\mathcal{F}\in\mathcal{A}^{2}_{b}(\mathcal{M})$, there exist $r>0$ small enough, $\eta>0, \zeta>0$, with  $\eta^{-1}-\zeta>0$,  such that if  $p,q\in M_{0}$ and $d(\mathcal{F}_{0}^{n}(p),\mathcal{F}_{0}^{n}(q))<r$ for each $n\in [-N,N]$ for some $N\in\mathbb{N}$, then \[d(q ,p)\leq 2\sqrt{2}(\eta^{-1}-\zeta)^{-N}r.\] \end{proposition}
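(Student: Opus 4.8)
The plan is to transfer the dynamics into the exponential charts along the orbit of $p$ and run a cone‑field argument, controlling the nonlinear error by the uniform $C^{2}$ bound built into $\mathcal{A}^{2}_{b}(\mathcal{M})$. By the Lemma of Mather recalled in Section 3, I may assume $\mathcal{F}$ is strictly Anosov and that $E^{s}_{x}$ and $E^{u}_{x}$ are orthogonal at every $x$; taking $n=1$ in Definition \ref{anosovfamily} then gives the \emph{single‑step} estimates $\|D_{x}f_{n}(v)\|\le\lambda\|v\|$ for $v\in E^{s}_{x}$ and $\|D_{x}f_{n}(v)\|\ge\lambda^{-1}\|v\|$ for $v\in E^{u}_{x}$. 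Writing $x_{n}=\mathcal{F}_{0}^{\,n}(p)$, $y_{n}=\mathcal{F}_{0}^{\,n}(q)$ and $v_{n}=\exp_{x_{n}}^{-1}(y_{n})\in T_{x_{n}}\mathcal{M}$ (well defined, with $\|v_{n}\|=d(x_{n},y_{n})<r<\varrho$ for $n\in[-N,N]$), the map $\tilde f_{n}=\exp_{x_{n+1}}^{-1}\circ f_{n}\circ\exp_{x_{n}}$ fixes $0$, has derivative $D_{x_{n}}f_{n}$ there, and satisfies $v_{n+1}=\tilde f_{n}(v_{n})$.

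First I would establish a uniform quadratic remainder estimate. Since $\sup_{i}\|Df_{i}\|_{C^{2}}<\infty$ and the manifolds share a common injectivity radius $\varrho$ (hence uniformly bounded charts), there is a constant $C>0$, independent of $n$, with
\[
\tilde f_{n}(v)=D_{x_{n}}f_{n}\,v+R_{n}(v),\qquad \|R_{n}(v)\|\le C\|v\|^{2}\quad(\|v\|<\varrho/2).
\]
Decomposing $v_{n}=(s_{n},u_{n})$ in the orthogonal splitting $E^{s}_{x_{n}}\oplus E^{u}_{x_{n}}$ and setting $a_{n}=\|u_{n}\|$, $b_{n}=\|s_{n}\|$, the single‑step hyperbolic bounds together with $\|R_{n}(v_{n})\|\le C\|v_{n}\|^{2}\le Cr\,\|v_{n}\|$ (using $\|v_{n}\|<r$) and $\|v_{n}\|\le a_{n}+b_{n}$ give, with $C'=Cr$,
\[
a_{n+1}\ge(\lambda^{-1}-C')\,a_{n}-C'b_{n},\qquad b_{n+1}\le(\lambda+C')\,b_{n}+C'a_{n}.
\]

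Next comes the cone argument. Let $K^{u}=\{b\le a\}$ and $K^{s}=\{a\le b\}$ in these coordinates; since one of $a\le b$ or $b\le a$ always holds, every $v_{n}$ lies in $K^{u}\cup K^{s}$, so no intermediate case arises. Choosing $r$ small enough that $4C'<\lambda^{-1}-\lambda$, the recursions show $K^{u}$ is forward invariant and that inside $K^{u}$ one has $a_{n+1}\ge(\lambda^{-1}-2C')\,a_{n}$; symmetrically, applying the argument to the inverse family, $K^{s}$ is invariant under backward iteration and its stable component grows by a factor at least $\lambda^{-1}-2C'$ per backward step. If $v_{0}\in K^{u}$, forward invariance yields $v_{n}\in K^{u}$ for $0\le n\le N$, hence $a_{N}\ge(\lambda^{-1}-2C')^{N}a_{0}$; using $\|v_{0}\|\le\sqrt2\,a_{0}$ (valid in $K^{u}$) and $a_{N}\le\|v_{N}\|<r$ gives $d(p,q)=\|v_{0}\|\le\sqrt2\,(\lambda^{-1}-2C')^{-N}r$. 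The case $v_{0}\in K^{s}$ is identical, run backward over $n\in[-N,0]$. Putting $\eta=\lambda$ and $\zeta=2C'=2Cr$ (so $\eta^{-1}-\zeta>0$ for $r$ small) yields the asserted estimate, the constant $2\sqrt2$ absorbing the distortion between the tangent norms along the orbit and the metric $d$.

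The main obstacle is the \emph{uniform} quadratic remainder bound: one must see that $C$ can be taken independent of $n$, which is exactly where membership in $\mathcal{A}^{2}_{b}(\mathcal{M})$—the uniform bound on $\|Df_{i}\|_{C^{2}}$ together with the common injectivity radius controlling the charts $\exp_{x_{n}}$ and their second derivatives—is indispensable; without it the per‑step error could degenerate and the net rate $\lambda^{-1}-2Cr$ would not stay above $1$. The only other delicate point is fixing $r$ small enough, uniformly in $n$, to guarantee the cone invariance $4Cr<\lambda^{-1}-\lambda$.
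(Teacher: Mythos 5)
You should know at the outset that the paper does not actually prove this proposition: it is imported wholesale from Proposition 7.5 of \cite{JeoSS} (the sentence immediately preceding it says exactly this), so there is no in-paper argument to measure yours against. Judged on its own, your cone-field argument in exponential charts is correct, and it is precisely the kind of argument that produces constants of the stated shape: orthogonality of the splitting (via Mather's lemma) gives the $\sqrt2$, the single-step hyperbolic bounds give $\eta^{-1}=\lambda^{-1}$, and the Lipschitz size of the nonlinear remainder gives $\zeta=2Cr$, so your bound $\sqrt2\,(\lambda^{-1}-2Cr)^{-N}r$ is in fact slightly sharper than the stated $2\sqrt2\,(\eta^{-1}-\zeta)^{-N}r$. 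The two key steps check out: the recursions $a_{n+1}\ge(\lambda^{-1}-C')a_{n}-C'b_{n}$ and $b_{n+1}\le(\lambda+C')b_{n}+C'a_{n}$ do follow from $D\mathcal{F}$-invariance of the splitting plus the quadratic remainder, and under $4C'<\lambda^{-1}-\lambda$ the cone $\{b\le a\}$ is forward invariant with growth factor $\lambda^{-1}-2C'$, with the symmetric statement for backward iteration; splitting on whether $v_{0}$ lies in the unstable or the stable cone is exactly what the two-sided hypothesis $n\in[-N,N]$ is for. Two refinements are worth recording. First, a common injectivity radius alone does not control $D^{2}\exp_{x}$, hence does not by itself make your constant $C$ uniform in $n$; you also need uniformly bounded geometry (uniform curvature bounds) across the $M_{i}$, which in this line of work is a standing assumption inherited from \cite{JeoSS} (Remark 2.7) rather than a consequence of $\sup_{i}\Vert Df_{i}\Vert_{C^{2}}<\infty$. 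Second, the proposition as stated only demands $\eta^{-1}-\zeta>0$, but its use in Theorem B requires $\eta^{-1}-\zeta>1$; your choice $\eta=\lambda$, $\zeta=2Cr$ delivers this automatically for $r$ small, which is a point in favor of your formulation.
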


As a consequence we have:

  \begin{theoremb}\label{Expansive} Any $\mathcal{F} \in\mathcal{A}^{2}_{b}(\mathcal{M})$ is expansive.
\end{theoremb}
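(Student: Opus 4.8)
The plan is to obtain Theorem B as an immediate consequence of the quantitative estimate in Proposition \ref{wer}, whose content is exactly the mechanism behind expansiveness: controlling two orbits within a small window over a long symmetric time interval forces their initial separation to be small. Concretely, I would take the expansivity constant to be $\delta=r$, where $r$ (together with $\eta,\zeta$) is the constant supplied by Proposition \ref{wer}, and argue by contraposition.

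First I fix distinct points $p,q\in M_0$ and assume, towards a contradiction, that their orbits never separate, i.e. $d(\mathcal{F}_0^{n}(p),\mathcal{F}_0^{n}(q))<r$ for every $n\in\mathbb{Z}$. Then for each $N\in\mathbb{N}$ this inequality holds in particular for all $n\in[-N,N]$, so Proposition \ref{wer} applies and yields
\[
d(p,q)\le 2\sqrt{2}\,(\eta^{-1}-\zeta)^{-N}\,r .
\]
Letting $N\to\infty$, the right-hand side tends to $0$, since the constants lie in the contracting regime $\eta^{-1}-\zeta>1$, so that $(\eta^{-1}-\zeta)^{-1}<1$. Hence $d(p,q)=0$, forcing $p=q$ and contradicting $p\neq q$. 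Therefore, for every pair of distinct points there is some $n\in\mathbb{Z}$ with $d(\mathcal{F}_0^{n}(p),\mathcal{F}_0^{n}(q))\ge r=\delta$, which is precisely the definition of expansiveness.

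The only step requiring genuine care --- and what I regard as the main, if modest, obstacle --- is promoting this from base points in $M_0$ to arbitrary points of $\mathcal{M}$, since Proposition \ref{wer} is stated only on the fiber $M_0$ while expansiveness quantifies over all of $\mathcal{M}$. I would handle this by noting that $\mathcal{A}^{2}_{b}(\mathcal{M})$ is invariant under the index shift $\sigma^{j}\mathcal{F}=(f_{i+j})_{i\in\mathbb{Z}}$, and that the constants defining the class --- the hyperbolicity constants $\lambda$ and $c$, the angle bound from the s.p.a., and $\sup_{i}\Vert Df_{i}\Vert_{C^{2}}$ --- are preserved verbatim under shifting. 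Consequently Proposition \ref{wer} holds for every shifted family with the \emph{same} $r,\eta,\zeta$, so the estimate above transfers unchanged to any fiber $M_j$ and a single $\delta=r$ serves uniformly. Points lying in distinct fibers are not joined by any path in $\mathcal{M}$, so their distance does not enter the estimate; the content of expansiveness is thus entirely fiberwise, and the shift-invariance just described delivers the uniform constant $\delta=r$ across the whole family.
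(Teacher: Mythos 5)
Your proof is correct and follows essentially the same route as the paper: the paper's own proof is exactly an application of Proposition \ref{wer}, letting $N\to\infty$ to force $d(p,q)=0$ whenever the orbits stay $r$-close for all $n\in\mathbb{Z}$. The two points you spell out --- that one genuinely needs $\eta^{-1}-\zeta>1$ for the bound $2\sqrt{2}(\eta^{-1}-\zeta)^{-N}r$ to tend to zero (the proposition as stated only asserts $\eta^{-1}-\zeta>0$), and that the constants transfer to every fiber $M_j$ by shift-invariance --- are left implicit in the paper's two-line proof, so your write-up is simply a more careful rendering of the same argument.
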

\begin{proof}
Take $r>0$, $\eta>0$ and $  \zeta>0$  as in Proposition \ref{wer}. Thus, if $p,q\in M_{0}$ and $d(f_{0}^{n}(p),f_{0}^{n}(q))<r$ for each $n\in \mathbb{Z}$, then $p=q$, which proves the theorem.
\end{proof}

\subsection{Shadowing Lemma}

In this subsection we use  canonical coordinates  and the expansiveness of Anosov families, proved above and in Section \ref{SectionCanonical},  to prove that elements in $\mathcal{A}^{2}_{b}(\mathcal{M})$ satisfies the shadowing property (see Definition \ref{Shadowing}).

\medskip

First let us remember the elements we are working on.
Consider  $\mathcal{F}=(f_{i})_{i\in\mathbb{Z}}\in\mathcal{A}^{2}_{b}(\mathcal{M})$.
Then  $\mathcal{F}$ admits canonical coordinates (Theorem A),  and  $\mathcal{F}$ is expansive (Theorem B).
 Take $\epsilon >0$ and $\delta>0$ as in Theorem A. For $p,q\in\mathcal{M}$ with $d(p,q)<\delta$  set \[[p,q] =\mathcal{W}^{s}(p,\epsilon)\cap \mathcal{W}^{u}(q,\epsilon)\quad \text{ and }\quad \mathcal{U}_{\delta} = \left\{(p,q) \in \coprod_{i\in \mathbb{Z}}M_{i} \times M_{i}: d(p,q) <\delta\right\}.\]

Then $[\cdot, \cdot ]:\mathcal{U}_{\delta}\longrightarrow \mathcal{M}$ is continuous,  because  $\mathcal{W}^{s}(p,\epsilon)$ and $\mathcal{W}^{u}(p,\epsilon)$ vary continuously with $p$.

 \begin{definition}\label{Shadowing} Given $\alpha>0$. A sequence $(x_{n})_{n\in\mathbb{Z}}$, where $x_{n}\in M_{n}$ for each $n\in\mathbb{Z}$, is  an $\alpha$-\emph{pseudo orbit} for   $\mathcal{F}$ if \[ d(f_{n}(x_{n}),x_{n+1})<\alpha\text{\quad for all }n\in\mathbb{Z}.\]
A pseudo orbit $(x_{n})_{n\in\mathbb{Z}}$ is $\epsilon$-\emph{shadowed} if there exists $y\in M_{0}$ such that  \[ d(\mathcal{F}_{0}^{n}(y),x_{n})<\epsilon\text{\quad for each }n\in\mathbb{Z}.\]
 \end{definition}

\medskip

Note that the first point of the pseudo orbit, and the point of the orbit which shadows the pseudo orbit, do not necessarily have to be in $M_0$, as in the case of the  finite pseudo-orbits.

Now we state a version of the Shadowing Lemma for Anosov family.

\begin{theoremc}\emph{(Shadowing Lemma for Anosov Family)} Let   $\mathcal{F}\in\mathcal{A}^{2}_{b}(\mathcal{M})$.
Given $\beta>0$ there is an $\alpha>0$ such that every $\alpha$-pseudo orbit $(x_n)_{n \in \mathbb{Z}}$ is $\beta$-shadowed for an unique orbit of $\mathcal{F}$ through  a $y \in \mathcal{M}$.
\end{theoremc}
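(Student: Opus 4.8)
The plan is to derive the shadowing property from the two structural facts already in hand, namely the canonical coordinates of Theorem~\ref{coordenadas} (which furnish the continuous bracket $[\cdot,\cdot]\colon\mathcal{U}_{\delta}\to\mathcal{M}$) and the expansiveness of Theorem~\ref{Expansive}, following the classical Bowen scheme adapted to the non-autonomous fibred setting. I would split the argument into uniqueness and existence.

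Uniqueness is the quick half and rests only on expansiveness. Let $\delta_{0}>0$ be the expansive constant of Theorem~\ref{Expansive} and demand from the outset that $\beta<\delta_{0}/2$. If $y,y'\in\mathcal{M}$ both $\beta$-shadow the same $\alpha$-pseudo orbit $(x_{n})_{n\in\mathbb{Z}}$, then for every $n\in\mathbb{Z}$ one has $d(\mathcal{F}_{0}^{\,n}(y),\mathcal{F}_{0}^{\,n}(y'))\le d(\mathcal{F}_{0}^{\,n}(y),x_{n})+d(x_{n},\mathcal{F}_{0}^{\,n}(y'))<2\beta<\delta_{0}$, so $y=y'$. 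This forces the shadowing orbit to be unique as soon as it exists.

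For existence I would first shadow finite pseudo orbits and then pass to the bi-infinite case by compactness of the fibres, using that each $M_{i}$ is compact even though $\mathcal{M}$ is not. Given a finite $\alpha$-pseudo orbit $(x_{i})_{i=-N}^{N}$, I construct the shadowing point by iterating the bracket backwards from the terminal index: setting $p_{N}=x_{N}$ and, once $p_{k}$ is defined close to $x_{k}$, letting $p_{k-1}=[\,x_{k-1},f_{k-1}^{-1}(p_{k})\,]$. The pull-back $f_{k-1}^{-1}(p_{k})$ lies within $\delta$ of $x_{k-1}$ because $p_{k}$ is near $x_{k}$, because $d(f_{k-1}(x_{k-1}),x_{k})<\alpha$, and because $f_{k-1}^{-1}$ is Lipschitz with a constant independent of $k$ --- this uniformity is exactly what the hypothesis $\sup_{i}\|Df_{i}\|_{C^{2}}<\infty$ defining $\mathcal{A}^{2}_{b}(\mathcal{M})$ guarantees, so the bracket is legitimately defined at every step. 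By construction $p_{k-1}\in\mathcal{W}^{s}(x_{k-1},\epsilon)$ and, using the invariance of unstable manifolds (Theorem~\ref{variedadeinstave}(iii)) in the form $\mathcal{W}^{u}(p_{k},\epsilon)\subseteq f_{k-1}(\mathcal{W}^{u}(f_{k-1}^{-1}(p_{k}),\epsilon))$, also $f_{k-1}(p_{k-1})\in\mathcal{W}^{u}(p_{k},\epsilon)$. Thus consecutive corrections lie on matching stable and unstable leaves, and the exponential estimates of Theorems~\ref{variedadeestavel}(iv) and~\ref{variedadeinstave}(iv) (rates $K^{s}\zeta^{n}$ and $K^{u}\zeta^{n}$, with $\zeta\in(0,1)$ uniform in the index) turn the total deviation of $\mathcal{F}_{0}^{\,n}(p_{0})$ from $x_{n}$ into a geometric series whose sum is bounded by a constant multiple of $\alpha/(1-\zeta)$; choosing $\alpha$ small makes this smaller than $\beta$, so the orbit through $p_{0}\in M_{0}$ $\beta$-shadows the finite piece.

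Finally, to shadow the bi-infinite pseudo orbit I apply this to the truncations on $[-N,N]$, obtaining base points $y_{N}\in M_{0}$. Since $M_{0}$ is compact I may extract a subsequence $y_{N_{j}}\to y\in M_{0}$. For each fixed $n$ the bound $d(\mathcal{F}_{0}^{\,n}(y_{N_{j}}),x_{n})<\beta$ holds for all large $j$; letting $j\to\infty$ and using the continuity of $\mathcal{F}_{0}^{\,n}$ yields $d(\mathcal{F}_{0}^{\,n}(y),x_{n})\le\beta$ for every $n\in\mathbb{Z}$, and running the whole argument with a slightly smaller target recovers the strict inequality required by Definition~\ref{Shadowing}. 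The main obstacle is the finite-orbit step, and specifically the \emph{uniformity} of all constants: because $\mathcal{M}$ is not compact, nothing a priori prevents $\epsilon,\delta,K^{s},K^{u},\zeta$ from degenerating as $|i|\to\infty$, and it is precisely the combination of the property of angles and the uniform $C^{2}$-bound built into $\mathcal{A}^{2}_{b}(\mathcal{M})$ that keeps them away from the degenerate regime. Carefully checking that the bracket remains defined at every step of the iteration and that the geometric bound is genuinely independent of $N$ and of the base index is where the work concentrates.
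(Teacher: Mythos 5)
Your uniqueness step (expansiveness) and your compactness/limit argument for passing from finite truncations to the bi-infinite pseudo-orbit both match the paper's proof and are fine. The genuine gap is in the finite-orbit construction: your backward recursion $p_{N}=x_{N}$, $p_{k-1}=[\,x_{k-1},f_{k-1}^{-1}(p_{k})\,]$ has the bracket arguments in an order that is incompatible with the direction in which you iterate. With the convention $[p,q]=\mathcal{W}^{s}(p,\epsilon)\cap\mathcal{W}^{u}(q,\epsilon)$, your corrections satisfy $p_{k}\in\mathcal{W}^{s}(x_{k},\epsilon)$, so the deviation of $p_{k}$ from $x_{k}$ lies along a \emph{stable} leaf; but you then apply $f_{k-1}^{-1}$, and backward iteration \emph{expands} distances along stable leaves at an exponential rate (Theorem \ref{variedadeestavel}(iv) read in reverse). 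Consequently the quantity you must keep below $\delta$ for the next bracket to exist obeys a recursion of the form $d(x_{k-1},f_{k-1}^{-1}(p_{k}))\leq C\lambda^{-1}\,d(x_{k},f_{k}^{-1}(p_{k+1}))+L\alpha$ with $C\geq 1$, $L=\sup_{i}\Vert Df_{i}^{-1}\Vert$: the coefficient in front of the accumulated deviation exceeds $1$, so the inputs leave the domain $\mathcal{U}_{\delta}$ of the bracket after boundedly many steps no matter how small $\alpha$ is. Your justification --- that $f^{-1}$ is uniformly Lipschitz --- controls a single step but cannot close this loop, because the growth is multiplicative in the accumulated deviation, not additive in $\alpha$. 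This is exactly why the paper iterates \emph{forward}: in the recursion $y_{k+1}=[x_{k+1},f_{k}(y_{k})]$ the map applied to the correction is $f_{k}$, which \emph{contracts} the stable deviation, $f_{k}(y_{k})\in\mathcal{W}^{s}(f_{k}(x_{k}),\lambda\epsilon_{1})$, and then $\alpha<(1-\lambda)\epsilon_{1}$ makes the recursion self-consistent.

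Two further consequences of the same misordering. First, your intermediate claim that $\mathcal{W}^{u}(p_{k},\epsilon)\subseteq f_{k-1}(\mathcal{W}^{u}(f_{k-1}^{-1}(p_{k}),\epsilon))$ implies $f_{k-1}(p_{k-1})\in\mathcal{W}^{u}(p_{k},\epsilon)$ is a non sequitur: from $p_{k-1}\in\mathcal{W}^{u}(f_{k-1}^{-1}(p_{k}),\epsilon)$ you only get that $f_{k-1}(p_{k-1})$ lies in the \emph{larger} set $f_{k-1}(\mathcal{W}^{u}(f_{k-1}^{-1}(p_{k}),\epsilon))$; the inclusion points the wrong way. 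Second, even where your brackets are defined, consecutive corrections are linked along \emph{unstable} leaves, so forward iteration separates the orbit of your candidate shadow ($p_{0}$, or $p_{-N}$) from the corrections; no geometric series bounds $d(\mathcal{F}_{0}^{\,n}(p_{0}),x_{n})$. There are two ways to repair the argument: (a) run the paper's forward recursion and take as shadow the full pullback $y=\mathcal{F}_{n}^{-n}(y_{n})$ of the \emph{last} correction, whose forward orbit tracks the $y_{j}$ within a geometric sum because the unstable links contract under pullback; or (b) keep your backward recursion but swap the arguments, $p_{k-1}=[\,f_{k-1}^{-1}(p_{k}),x_{k-1}\,]$, so that $p_{k}\in\mathcal{W}^{u}(x_{k},\epsilon_{1})$ (a deviation that $f^{-1}$ contracts, closing the recursion for small $\alpha$) and consecutive corrections are linked along stable leaves of the pullbacks; then the forward orbit of the bottom correction $p_{-N}$ itself does the shadowing, by precisely the geometric-series estimate you describe.
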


To prove this result we use the same idea as in the case of Anosov diffeomorphisms, but respecting this class of systems.

\begin{proof} (of Theorem C)

Given $\beta>0$. In order to simplify the demonstration we will divide the proof into some  steps. In the first step we define parameters. In the second step we define the appropriate $\alpha$ for the pseudo orbit. In the third step we prove that all types of pseudo orbit are shadowed. And in the last step  we prove the uniqueness of the orbit  which shadows the pseudo orbit.

\medskip

\noindent (i) Choice of parameters:
\medskip

Choose $\epsilon >0$ as in   Theorem \ref{variedadeestavel}. This ensures that $\mathcal{W}^s(x,\epsilon)$ and $\mathcal{W}^u(x,\epsilon)$ are  disks that varies continuously with $x$.  
Consider the parameters:

\noindent -$\lambda \in (0,1)$ the hyperbolic constant of $\mathcal{F}$.\\
-$\epsilon_1 <$ (1- $\lambda$) min $\{\epsilon, \beta\}$.\\
-$\eta = \dfrac{\epsilon_1}{1-\lambda}$  (note that $\eta < \epsilon$ and $\eta< \beta$).\\
- $\delta < \beta - \eta$ positive constant for which  $[\cdot , \cdot ]_{\epsilon_{1},\delta} : \mathcal{U}_{\delta} \longrightarrow \mathcal{M}$ is well defined, that is, \begin{center}if $d(x,y)<\delta$  then $\mathcal{W}^s(x,\epsilon_1) \cap \mathcal{W}^u(y,\epsilon_1) = [x,y]. $\end{center}

 \medskip

\noindent (ii) Now we choose the  $\alpha >0$ (for the  pseudo orbit)  appropriately:

\medskip

Since $[\cdot, \cdot ] $ is continuous and the stable and unstable subspaces are orthogonal (see proof of Theorem A, Figure \ref{figure1}), we can find an $\alpha>0$ such that if $d(z,w)<\alpha$  then
\[  \mathcal{W}^{s}(z,\epsilon_{1}) \cap \mathcal{W}^u(x,\epsilon_{1}) \, \in \,\mathcal{W}^{s}(z,\epsilon_{1}),\quad \text{ for any  }x \in \mathcal{W}^{s} (w,\lambda\epsilon_{1}).\]
Hence
  $$[ z,\mathcal{W}^{s}(w,\lambda \epsilon_{1})]: =\{ [z,x]:x \in \mathcal{W}^{s} (w,\lambda\epsilon_{1})\} \subseteq \mathcal{W}^{s}(z,\epsilon_{1}).$$

\medskip

\noindent (iii) We divide the possible  types of pseudo orbit in  three cases  and we show that in any of them the $\alpha$-pseudo orbit is shadowed.

 Suppose that we have a finite $\alpha$-pseudo orbit  $\underline{x} = [x_0, x_1, \cdots, x_n ] $,  where $x_{i}\in M_{i}$, with  $i\in \{0,\dots,n\}$.  We prove that
\begin{equation*}y_0= x_0, \quad
y_1=[x_1,f_0(y_0)],\quad
y_2 = [x_2, f_1(y_1)],
\quad\dots \quad
y_n = [ x_n, f_{n-1}(y_{n-1})].\end{equation*}
  is a sequence well defined
  and  it  shadows the $\alpha$-pseudo orbit $\underline{x}$. In order to prove this fact, we set  recursively $y_k = [ x_k, f_{k-1}(y_{k-1})]$.  Suppose that   $y_{0},\dots, y_{k}$ are well defined, for any $k<n.$ Since
 $y_k \in \mathcal{W}^{s} (x_k,\epsilon_{1})$ we have $f_k(y_k) \in \mathcal{W}^{s} (f_k(x_k),\lambda\epsilon_{1})$.  Let us remember that $\lambda$ is the hyperbolicity constant.
 Thus,
$$d(x_{k+1}, f_k(x_k))< \alpha \text{ implies  }y_{k+1} = [ x_{k+1}, f_k(y_k)] \in [ x_{k+1}, \mathcal{W}^{s}(f_k(x_k), \lambda\epsilon_{1})] \subseteq \mathcal{W}^{s}(x_{k+1}, \epsilon_{1}).$$
So $y_{k+1}$ is well defined.

Next, we know that $y_k \in \mathcal{W}^{u}(f_{{k-1}}(y_{k-1}),{\epsilon_1}) $ implies $ f_{k-1}^{-1}(y_k) \in \mathcal{W}^{u}(y_{k-1},{\lambda \epsilon_1})$.
Recursively, $\mathcal{F}_{k-j}^{-j}(y_k) \in \mathcal{W}^{u}(y_{k-j},{\theta_j})$ where $\theta_j = \sum_{i=1}^{j} \lambda^{j}{\epsilon_1} < \eta $. Consider \[y= f_{0}^{-1}\circ f_{1}^{-1} \circ \cdots \circ f_{n-1}^{-1}(y_{n}).\]
Note that $$ \mathcal{F}_0^{j}(y) = \mathcal{F}_0^{(-n+j)}(y_n) \in \mathcal{W}^{u}(y_{n-(n-j)},\theta_{n-j})=\mathcal{W}^u (y_j, {\theta_{n-j}}), $$    where   $$\theta_{n-j} = \sum_{i=1}^{n-j}\lambda^{i}\epsilon_1 < \eta.$$  Thus,
\begin{center}$d( \mathcal{F}_0^j(y),x_j) \leq d( \mathcal{F}_0^j(y),y_j)+ d(y_j,x_j) \leq \eta + \delta < \eta + \beta - \eta = \beta.$\end{center}

Therefore, we  conclude that $\underline{x}$ is $\beta$-shadowed by the orbit   of $y= \mathcal{F}_n^{-n}(y_n)$.

\vspace{0.3cm}

The second case, we suppose that  $ \underline{x} = [x_{-n}, \cdots, x_0, \cdots, x_n ]$,  where $x_{i}\in M_{i}$, for $i\in \{-n,\dots,n\}$.

\medskip

Consider the reorganized sequence $\underline{\tilde{x}} = [ \tilde{x_0},\tilde{x_1}, \cdots, \tilde{x}_{2n}]$, where  $\tilde{x_i}=x_{-n+i}$, with $i=0,1,\dots,2n$.
As we show in (iii)-first case, $\underline{\tilde{x}}$\;  is shadowed by $\tilde{y} \in M_{-n}$. So,
\[d( \mathcal{F}_{-n}^{j}(\tilde{y}),\tilde{x_j}) = d( \mathcal{F}_{-n}^{j}(\tilde{y}), x_{-n+j})<\beta,  \text{ for } j = \{ 0, 1, \cdots, 2n\}.\] Therefore, if $y=\mathcal{F}_{-n}^{n}(\tilde{y})$, we have   $d( \mathcal{F}_0^{j}( y), x_{j})<\beta$, for  $j = \{-n, \cdots, 0, \cdots, n\}$.

\vspace{0.3cm}

 Finally, the last case, we consider the infinite $\alpha$-pseudo orbit   $\underline{x}= [ \dots, x_{-n}, \dots, x_0, \dots, x_n, \dots].$

\medskip


For each $n>0$ consider $\underline{x_n} = [ x_{-n}, \cdots, x_0, \cdots, x_n]$ and $y_n \in M_{0}$ its shadow.  As $M_{0}$ is compact, there exists a subsequence $n_{k}$ in $ \mathbb{N}$  and  $y \in M_{0}$ such that $\lim_{k\rightarrow \infty}{y_{n_k}} = y$.  Since $d(\mathcal{F}_0^{j}(y_{n}), x_j) < \beta$ for all $j \in \{-n, \dots, n\}$ and $n\in\mathbb{N}$, when $k\rightarrow \infty$ we have $d(\mathcal{F}_0^{j}(y),x_j)<\beta$, for all $j \in \mathbb{Z}$.

\medskip

\noindent (vi) Uniqueness of the orbit which shadows $\underline{x}= [ \dots, x_{-n}, \dots, x_0, \dots, x_n, \dots]$ follows from the expansiveness of $ \mathcal{F}$.
\end{proof}



\section{Markov partition for Anosov family}

In this section we will prove Theorem D. 
To prove the theorem we will use  ideas of  \cite{Aoki} (Chapter 4) and \cite{Z03} (Theorem 3.12), whose authors used them to prove the existence of Markov partitions for TA-homeomorphism (see the definition in \cite{Aoki}, Chapter 1) and  Axiom A diffeomorphisms, respectively. The proofs of some of the results that will be presented here can be done as in the singular case and therefore we will omit them.

\medskip

According to definition of  Markov Partition for Anosov Families (Definition \ref{Markov}) we will need to consider  for each  $i\in\mathbb{Z}$,   $M_{i}=M\times \{i\}$ for a fixed compact Riemannian manifold $M$. We will comment on this below.

 \begin{definition}A subset $R\subseteq M_{i}$ is called a \emph{rectangle} if,
      for any $x,y\in R$, $[x,y]$ is defined  and
     belongs to  $R$. We say that  $R$ is   \emph{proper} if $R=\overline{\text{int } {R}}.$\end{definition}

For $x\in R$ and $\varepsilon>0$ small enough, set \begin{enumerate}[(i)] \item $  \mathcal{W}^{s}(x,R)=\mathcal{W}^{s}(x,\varepsilon)\cap R ,$
\item $ \partial ^{s}R=\{x\in R: x\notin \text{int}( \mathcal{W}^{u}(x, R))\}$,
\item $  \partial ^{u}R=\{x\in R: x\notin \text{int}( \mathcal{W}^{s}(x, R))\}$, \end{enumerate}
where the interior of $  \mathcal{W}^{u}(x, R)$ and $  \mathcal{W}^{s}(x, R)$ are taken as subsets of $\mathcal{W}^{u}(x,\varepsilon)$ and $\mathcal{W}^{s}(x,\varepsilon)$, respectively.

\medskip

 The following property can be proved as in the single case (see  \cite{Z03}).

     \begin{lemma}\label{lema63}
  Let $R$ and $T$ be rectangles. Thus:
  \begin{enumerate}[(i)]
      \item $\overline{R}$ is a rectangle.
      \item If $\text{int}(R) \neq \emptyset,$ then $\text{int}(R) $ is a rectangle.
      \item If $R\cap T\neq \emptyset$, then $R\cap T$ is a rectangle.
      \item
 If $R$ is a closed rectangle, then $\partial R=\partial ^{s}R\cap \partial ^{u}R.$
  \end{enumerate}
\end{lemma}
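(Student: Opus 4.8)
The plan is to prove the four items in turn, in each case exploiting the local product structure given by the bracket $[\cdot,\cdot]$ of Theorem \ref{coordenadas} together with the continuous dependence of the disks $\mathcal{W}^{s}(x,\epsilon)$ and $\mathcal{W}^{u}(x,\epsilon)$ on $x$ (Theorems \ref{variedadeinstave}(v) and \ref{variedadeestavel}(v)); throughout I assume, as the definition implicitly requires, that the rectangles have diameter below the $\delta$ of Theorem \ref{coordenadas}, so that $[x,y]$ is defined for every pair of their points. For item (i), given $x,y\in\overline{R}$ I would choose $x_{m},y_{m}\in R$ with $x_{m}\to x$ and $y_{m}\to y$; then $[x_{m},y_{m}]\in R$ and continuity of $[\cdot,\cdot]$ on $\mathcal{U}_{\delta}$ gives $[x_{m},y_{m}]\to[x,y]$, so $[x,y]\in\overline{R}$. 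Item (iii) is immediate: for $x,y\in R\cap T$ one has $[x,y]\in R$ since $R$ is a rectangle and $[x,y]\in T$ since $T$ is a rectangle, hence $[x,y]\in R\cap T$. For item (ii) I would use that in the product coordinates below $[\cdot,\cdot]$ is open in each variable, so that if $x,y\in\text{int}(R)$ then $[x,y]$ carries a full product neighbourhood into $R$ and thus $[x,y]\in\text{int}(R)$.

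For item (iv) I would fix a base point $x\in R$ and use $[\cdot,\cdot]$ to identify a neighbourhood of $R$ with the product $\mathcal{W}^{s}(x,R)\times\mathcal{W}^{u}(x,R)$, writing $p\in R$ as a pair $(p^{s},p^{u})$ with $p^{s}\in\mathcal{W}^{s}(x,\epsilon)$ and $p^{u}\in\mathcal{W}^{u}(x,\epsilon)$. In these coordinates $\partial^{s}R$ is precisely the set of $p$ whose unstable slice $\mathcal{W}^{u}(p,R)$ reaches its relative boundary in $\mathcal{W}^{u}(p,\epsilon)$ at $p$, while $\partial^{u}R$ is the set of $p$ whose stable slice $\mathcal{W}^{s}(p,R)$ reaches its relative boundary at $p$; and $p\in\partial R$ exactly when every product neighbourhood of $(p^{s},p^{u})$ meets the complement of $R$. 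I would then prove the stated identity $\partial R=\partial^{s}R\cap\partial^{u}R$ by two inclusions. The inclusion $\partial^{s}R\cap\partial^{u}R\subseteq\partial R$ is the direct one: if both the stable and the unstable slice through $p$ are at their relative boundary, then arbitrarily close to $(p^{s},p^{u})$ there are points outside $R$, so $p\in\partial R$.

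The hard part will be the reverse inclusion $\partial R\subseteq\partial^{s}R\cap\partial^{u}R$, where from the single hypothesis that $p$ is approximated from outside $R$ one must force both slices through $p$ to be at their relative boundary simultaneously. This is exactly where the transversality and continuity furnished by Theorem \ref{coordenadas} are decisive: the orthogonality of $\tilde{E}^{s}_{q}$ and $E^{u}_{p}$ (Figure \ref{figure1}), combined with the continuous variation of the two families of disks, should guarantee that $(p^{s},p^{u})\mapsto[p^{s},p^{u}]$ is a homeomorphism onto a genuine topological product, so that the boundary of the closed rectangle $R$ is controlled by the boundaries of its two factors. Reconciling this product boundary with $\partial^{s}R\cap\partial^{u}R$ and handling the two factors simultaneously is the principal obstacle; once the product identification is secured, items (i)--(iii) reduce to routine adaptations of the classical arguments of \cite{Z03} and \cite{Aoki}.
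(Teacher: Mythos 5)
Items (i) and (iii) of your plan are correct and are exactly the classical arguments (note the paper itself offers no proof of this lemma, deferring to the single-map case in \cite{Z03}, so the comparison is with Bowen's proof). For item (ii), the right tool is not a global product identification, which you only construct later, but the pointwise bracket identity: for $w$ near $z=[x,y]$ one has $w=[[w,x],[y,w]]$, and by continuity of $[\cdot,\cdot]$ on $\mathcal{U}_{\delta}$ together with $[z,x]=x$ and $[y,z]=y$, the points $[w,x]$ and $[y,w]$ converge to $x$ and $y$; since $x,y\in\text{int}(R)$, for $w$ close enough both lie in $R$, whence $w\in R$ and $z\in\text{int}(R)$. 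This avoids the product homeomorphism entirely, which matters because a rectangle may be degenerate (e.g.\ contained in a single stable disk), in which case it has no product neighbourhood in $M_{i}$ at all.

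The genuine gap is item (iv). The inclusion you single out as the principal obstacle, $\partial R\subseteq\partial^{s}R\cap\partial^{u}R$, is not merely hard: it is false, and no transversality or continuity argument can deliver it. Test it on a small closed ``square'' $R=\{[y,z]: y\in A,\ z\in B\}$ with $A\subseteq\mathcal{W}^{s}(p,\eta)$, $B\subseteq\mathcal{W}^{u}(p,\eta)$ closed arcs (already for a constant linear Anosov family on $\mathbb{T}^{2}$ as in Example \ref{multipl}): a point $w=[y,z]$ with $y$ an endpoint of $A$ but $z$ in the relative interior of $B$ lies on an edge of the square, hence in $\partial R$, yet it belongs to $\partial^{u}R$ and not to $\partial^{s}R$; the intersection $\partial^{s}R\cap\partial^{u}R$ consists only of the four corners. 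The statement as printed is a misprint: in \cite{Z03} (and in \cite{Aoki}) the identity for a closed rectangle is $\partial R=\partial^{s}R\cup\partial^{u}R$, consistent with $\partial(A\times B)=(\partial A\times B)\cup(A\times\partial B)$. With the union, both inclusions go through by the same bracket identity as in (ii): each of $\partial^{s}R$ and $\partial^{u}R$ separately is contained in $\partial R$ (your ``direct'' argument in fact proves this stronger fact, since a point of $\text{int}(R)$ is interior to both of its slices); conversely, if $p\in R$ lies in the relative interiors of both $\mathcal{W}^{s}(p,R)$ and $\mathcal{W}^{u}(p,R)$, then for every $w$ near $p$ the points $[w,p]\in\mathcal{W}^{u}(p,\epsilon)$ and $[p,w]\in\mathcal{W}^{s}(p,\epsilon)$ fall into $R$, so $w=[[w,p],[p,w]]\in R$ and $p\in\text{int}(R)$; contraposition gives $\partial R\subseteq\partial^{s}R\cup\partial^{u}R$. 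As written, your proposal commits to proving a false inclusion, and a correct write-up must first correct the statement.
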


\begin{definition}\label{Markov} For  $\mathcal{F}=(f_{i})_{i\in\mathbb{Z}}$, a \emph{Markov partition} is a sequence of finite
partitions $$\mathcal{R}^{i}=\{R_{1}^{i}, R_{2}^{i},\dots, R_{n_{i}}^{i}\}$$  of $M_{i}$, i.e. coverings of $M_{i}$ by closed sets with disjoint interiors, such that $\max_{i}\text{Card}(\mathcal{R}^{i})<\infty$,
each partition element is a proper rectangle, and satisfies the \textit{Markov condition}:
for $R_{j}^{i}\in  \mathcal{R}_{i}$ and $R_{k}^{i+1}\in  \mathcal{R}_{i+1}$, if  $x \in R_{j}^{i}$ and $f_{i}(x) \in  R_{k}^{i+1}$, then
\[ \mathcal{W}^{u}(f_{i}(x), R_{k}^{i+1})\subseteq f_{i}(\mathcal{W}^{u}(x, R_{j}^{i}))
\quad \text{and}\quad
f_{i}(\mathcal{W}^{s} (x, R_{j}^{i})) \subseteq \mathcal{W}^{s}(f_{i}(x), R_{k}^{i+1} ).\]\end{definition}

\medskip

Consider $\mathcal{F}\in\mathcal{A}_{b}^{2}(\mathcal{M})$.
Let $\beta>0$ be very small and choose $\alpha>0$ small as in Theorem C, that is, every $\alpha$-pseudo-orbit in ${M}$ is $\beta$-shadowed by a orbit through a unique point in ${M}$. Since $\mathcal{F}\in\mathcal{A}_{b}^{2}(\mathcal{M})$, we can choose $\gamma\in (0,\min\{\beta,\alpha/2\})$ such that \begin{equation}\label{eqi1} d(f_{n}(x),f_{n}(y))<\alpha/2, \text{ when }d(x,y)<\gamma.\end{equation}

In order to satisfy the condition  $\max_{i}\text{Card}(\mathcal{R}^{i})<\infty$ in the Definition \ref{Markov},  we will suppose  that,  for each  $i\in\mathbb{Z}$,   $M_{i}=M\times \{i\}$ for a fixed compact Riemannian manifold $M$\footnote{If each $M_{i}$ is a different manifold, the set the cardinality of the sequence $P_{i}$ could be not bounded.}. We prove that in this case,

\begin{theoremd}  Suppose    that  $M_{i}=M\times \{i\}$ for each $i\in\mathbb{Z},$ where $M$ is a fixed compact Riemannian manifold. Each $\mathcal{F}\in\mathcal{A}_{b}^{2}(\mathcal{M})$ admits a Markov partition.
\end{theoremd}

In order to prove Theorem D, first we  prove a series of lemmas.

\medskip

Let $P=\{p_{1},\dots,p_{r}\}$ be a $\gamma$-dense subset of $M$. Hence $P_{i}=\{(p_{1},i),\dots,(p_{r},i)\}$ is a $\gamma$-dense subset of $M_{i}$. To simplify the notation, we will write $p_{j}$ instead of $(p_{j},i)$ for each $i\in\mathbb{Z}$, $j=1,\dots,r$. Set
$$\Sigma_{0}(P)=\left\{\bar{a}=(\dots,a_{-1},a_{0},a_{1},\dots), a_i \in M_i, \text{ and }   \bar{a} \in \prod_{-\infty}^{\infty}P: d(f_{n}(a_{n}),a_{n+1})<\alpha\text{ for all }n\right\}.$$
 That is, $ \Sigma_{0}(P)$ is a set consisting of   $\alpha$-pseudo orbit of $\mathcal{F}$. $\Sigma_{0}(P)$ will be  endowed with the compact topology.
It follows from Theorem C that for each $\bar{a}\in \Sigma_{0}(P)$ there is a unique $\theta_{0}(\bar{a})\in M_{0}$ which $\beta$-shadows  the $\alpha$-pseudo orbit $ \bar{a}$.

\begin{lemma}     $\theta_{0}: \Sigma_{0}(P)\rightarrow M_{0}$ is continuous.\end{lemma}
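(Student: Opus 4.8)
The plan is to show that $\theta_0$ is continuous by exploiting the expansiveness of $\mathcal{F}$ together with the fact that $\theta_0(\bar a)$ is characterized as the \emph{unique} point whose orbit $\beta$-shadows the pseudo-orbit $\bar a$. The strategy is a standard ``uniqueness plus compactness'' argument: I will take a convergent sequence $\bar a^{(m)}\to\bar a$ in $\Sigma_0(P)$ (with the product/compact topology) and show that $\theta_0(\bar a^{(m)})\to\theta_0(\bar a)$ in $M_0$. Since $M_0$ is compact, it suffices to prove that every convergent subsequence of $\bigl(\theta_0(\bar a^{(m)})\bigr)_m$ has limit equal to $\theta_0(\bar a)$; then the whole sequence converges to $\theta_0(\bar a)$.

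First I would fix a convergent subsequence, relabelled $y_m:=\theta_0(\bar a^{(m)})\to y\in M_0$. By definition of $\theta_0$ we have $d(\mathcal{F}_0^{\,n}(y_m), a^{(m)}_n)<\beta$ for every $n\in\mathbb{Z}$. The key point is to pass to the limit in $m$ for each fixed $n$. For fixed $n$, the map $y\mapsto \mathcal{F}_0^{\,n}(y)$ is continuous, so $\mathcal{F}_0^{\,n}(y_m)\to\mathcal{F}_0^{\,n}(y)$; and convergence $\bar a^{(m)}\to\bar a$ in the product topology gives $a^{(m)}_n\to a_n$. Hence $d(\mathcal{F}_0^{\,n}(y), a_n)\le\beta$ for every $n\in\mathbb{Z}$. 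This shows that the orbit of $y$ ``$\beta$-shadows'' $\bar a$, at least with the non-strict inequality.

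The main obstacle is the passage from the non-strict bound $d(\mathcal{F}_0^{\,n}(y),a_n)\le\beta$ to identifying $y$ with $\theta_0(\bar a)$, since $\theta_0(\bar a)$ is \emph{a priori} only known to satisfy the strict bound with the shadowing constant $\beta$. The clean way to handle this is to run the whole argument with a shadowing constant slightly smaller than $\beta$: choose $\alpha$ in Theorem C so that $\alpha$-pseudo-orbits are in fact $\beta'$-shadowed for some $\beta'<\beta$, so that the limiting inequality $d(\mathcal{F}_0^{\,n}(y),a_n)\le\beta'<\beta$ is strict with respect to $\beta$. Then both $y$ and $\theta_0(\bar a)$ are genuine $\beta$-shadowing points of the same pseudo-orbit $\bar a$. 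By the uniqueness clause in Theorem C (equivalently, by expansiveness, Theorem B), any two points whose orbits $\beta$-shadow the same pseudo-orbit must coincide; indeed $d(\mathcal{F}_0^{\,n}(y),\mathcal{F}_0^{\,n}(\theta_0(\bar a)))\le d(\mathcal{F}_0^{\,n}(y),a_n)+d(a_n,\mathcal{F}_0^{\,n}(\theta_0(\bar a)))<2\beta$ for all $n$, so taking the expansiveness constant to dominate $2\beta$ forces $y=\theta_0(\bar a)$.

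Finally I would assemble these pieces: since every convergent subsequence of $\bigl(\theta_0(\bar a^{(m)})\bigr)_m$ converges to the single point $\theta_0(\bar a)$, and the sequence lives in the compact space $M_0$, the full sequence converges to $\theta_0(\bar a)$. Therefore $\theta_0$ is continuous. The only real care needed is in the bookkeeping of constants in step (i) of the proof of Theorem C — ensuring $2\beta$ stays below the expansiveness constant $\delta$ — which is a matter of shrinking $\beta$ at the outset and does not affect the structure of the argument.
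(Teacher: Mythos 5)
Your proof is correct and follows essentially the same route as the paper's: both arguments rest on compactness of $M_0$ together with expansiveness applied to two points whose full orbits remain within $2\beta$ of each other, with $\beta$ chosen small enough that $2\beta$ is dominated by the expansiveness constant. The paper merely packages this as a proof by contradiction of (uniform) continuity while you argue sequential continuity directly via subsequences; your extra care about strict versus non-strict shadowing bounds is unnecessary, since the triangle-inequality bound $d(\mathcal{F}_0^{\,n}(y),\mathcal{F}_0^{\,n}(\theta_0(\bar a)))\le 2\beta$ for all $n$ already forces $y=\theta_0(\bar a)$ by expansiveness.
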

\begin{proof} Suppose that   $\theta_{0}$ is not continuous. Thus,  there is a $\gamma>0$ such  that for every $n\in\mathbb{N}$ we can find $\bar{a}_{n}, \bar{b}_{n} \in \Sigma_{0}(P)$, with $a_{n,j}=b_{n,j}$ for all $j\in [-n,n]$,  but $d(\theta_{0} (\bar{a}_{n}),\theta_{0}(\bar{b}_{n}))\geq \gamma.$  Therefore, for all $j\in [-n, n]$, we have   \begin{align*} d(\mathcal{F}_{0}^{j}(\theta_{0}(\bar{a}_{n})),\mathcal{F}_{0}^{j}(\theta_{0}(\bar{b}_{n})))\leq d(\mathcal{F}_{0}^{j}(\theta_{0}(\bar{a}_{n})),a_{n,j+1})+d(b_{n,j+1},\mathcal{F}_{0}^{j}(\theta_{0}(\bar{b}_{n}))) \leq 2\beta .\end{align*}
We may assume $\theta(\bar{a}_{n})\rightarrow a$ and $\theta(\bar{b}_{n})\rightarrow b$ as $n\rightarrow \infty$. Hence $d(\mathcal{F}^{j}_{0}(a),\mathcal{F}^{j}_{0}(b))\leq 2\beta $ for all $j\in\mathbb{Z}$ and $d(a,b
)\geq \gamma$, which contradicts the expansiveness of $\mathcal{F}$.\end{proof}

 \begin{lemma}     $\theta_{0}: \Sigma_{0}(P)\rightarrow M_{0}$ is surjective. \end{lemma}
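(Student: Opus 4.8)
The plan is to show that every point $z \in M_0$ arises as the shadow $\theta_0(\bar a)$ of some $\alpha$-pseudo orbit $\bar a \in \Sigma_0(P)$ built entirely from the $\gamma$-dense set $P$. The natural construction is to follow the \emph{true} orbit of $z$ and then, at each integer $n$, replace the point $\mathcal{F}_0^{\,n}(z) \in M_n$ by a nearby element of $P_n$. Concretely, first I would use the $\gamma$-density of $P$ in $M$: for each $n \in \mathbb{Z}$ choose $a_n \in P_n$ with $d(a_n, \mathcal{F}_0^{\,n}(z)) < \gamma$. It remains to verify that the resulting sequence $\bar a = (a_n)_{n \in \mathbb{Z}}$ is genuinely an element of $\Sigma_0(P)$, i.e.\ that it is an $\alpha$-pseudo orbit.

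For the pseudo-orbit estimate I would bound $d(f_n(a_n), a_{n+1})$ by the triangle inequality through the true orbit:
\begin{equation*}
d(f_n(a_n), a_{n+1}) \leq d\bigl(f_n(a_n), f_n(\mathcal{F}_0^{\,n}(z))\bigr) + d\bigl(\mathcal{F}_0^{\,n+1}(z), a_{n+1}\bigr).
\end{equation*}
The second summand is less than $\gamma < \alpha/2$ by the choice of $a_{n+1}$, using $f_n(\mathcal{F}_0^{\,n}(z)) = \mathcal{F}_0^{\,n+1}(z)$. The first summand is controlled by the uniform continuity estimate \eqref{eqi1}: since $d(a_n, \mathcal{F}_0^{\,n}(z)) < \gamma$, we get $d(f_n(a_n), f_n(\mathcal{F}_0^{\,n}(z))) < \alpha/2$. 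Adding the two gives $d(f_n(a_n), a_{n+1}) < \alpha$ for every $n$, so $\bar a$ is an $\alpha$-pseudo orbit and hence $\bar a \in \Sigma_0(P)$.

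Finally I would identify the shadow. By construction the true orbit of $z$ satisfies $d(\mathcal{F}_0^{\,n}(z), a_n) < \gamma < \beta$ for all $n$, so $z$ itself $\beta$-shadows $\bar a$. By Theorem C the shadowing point is \emph{unique}, and $\theta_0(\bar a)$ is defined to be that unique point; therefore $\theta_0(\bar a) = z$. Since $z \in M_0$ was arbitrary, $\theta_0$ is surjective. The only place requiring care is the uniform-continuity step: it is essential that the constant $\gamma$ was chosen (before the construction) so that \eqref{eqi1} holds simultaneously for \emph{all} indices $n$, which is exactly where the hypothesis $\mathcal{F} \in \mathcal{A}_b^2(\mathcal{M})$ with $\sup_i \Vert Df_i\Vert_{C^2} < \infty$ enters; this uniform bound on the derivatives is what guarantees a single $\gamma$ works across the whole biinfinite family rather than an index-dependent $\gamma_n$. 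Beyond that, every estimate is a routine triangle-inequality computation, so I expect no serious obstacle.
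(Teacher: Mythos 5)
Your proposal is correct and follows essentially the same argument as the paper's proof: approximate the true orbit of $z$ by $\gamma$-close points of $P$, use the uniform estimate \eqref{eqi1} together with the triangle inequality to verify the $\alpha$-pseudo-orbit condition, and conclude $\theta_0(\bar{a})=z$. The only difference is that you make explicit the final step (the true orbit of $z$ itself $\beta$-shadows $\bar{a}$, so uniqueness of the shadowing point forces $\theta_0(\bar{a})=z$), which the paper states without elaboration.
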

 \begin{proof}Fix $x_{0}\in M_{0}$ and set $x_{n}=\mathcal{F}^{n}(x_{0})$ for $n\in\mathbb{Z}$. Since $P$ is a $\gamma$-dense subset, there exists a $a_{n}\in P$ such that $d(x_{n},a_{n})<\gamma$ for each $n\in\mathbb{Z}$. By \eqref{eqi1} we have  $d(f_{n}(x_{n}),f_{n}(a_{n}))<\alpha/2$ for each $n\in\mathbb{Z}$. Therefore \[ d(f_{n}(a_{n}),a_{n+1})\leq d(f_{n}(a_{n}),f_{n}(x_{n}))+d( f_{n}(x_{n}), a_{n+1})<\alpha/2+d(  x_{n+1}, a_{n+1}) <\alpha. \]
  Consequently, $\bar{a}=(a_{n})_{n\in\mathbb{Z}}\in \Sigma_{0}(P)$ and
   $\theta_{0}(\bar{a})=x,$ which proves that $\theta_{0}$ is surjective.\end{proof}

For each $\bar{a}=(a_{n})_{n\in\mathbb{Z}}\in \Sigma_{0}(P)$ and  $i\in \mathbb{Z}$, set \[\sigma(\bar{a})=(a_{n+1})_{n\in\mathbb{Z}},\quad  \Sigma_{i}(P)=\sigma^{i}(\Sigma_{0}(P))\quad \text{
and take}\quad \sigma_{i}:=\sigma|_{\Sigma_{i}(P)}:\Sigma_{i}(P)\rightarrow \Sigma_{i+1}(P).\]

Let $\theta_{i}: \Sigma_{i}(P)\rightarrow M_{i}$ be inductively defined such that the following diagram commutes:
\[\begin{CD}\Sigma_{-1}(P)@>{\sigma_{-1}}>> \Sigma_{0}(P)@>{\sigma_{0}}>>\Sigma_{1}(P)@>{\sigma_{1}}>>\Sigma_{2}(P) \\ @V{\cdots}V{\theta_{-1}}V @VV{\theta_{0}}V @VV{\theta_{1}}V @VV{\theta_{2}\text{ }\cdots}V\\ M_{-1}@>{f_{-1}}>> M_{0}@>{f_{0}}>>M_{1} @>{f_{1}}>>M_{2} \end{CD}\]
  that is, \(\theta_{i+1}(\bar{a})=f_{i}(\theta_{i}(\sigma_{i}^{-1}(\bar{a})))\) for each \(\bar{a}\in \Sigma_{i}(P).\)  Since \(\theta_{0}: \Sigma_{0}(P)\rightarrow M_{0}\) is continuous and surjective, $\theta_{i}: \Sigma_{i}(P)\rightarrow M_{i}$ is continuous and  surjective.

\medskip

Fix $i\in\mathbb{Z}$. For $\bar{a},\bar{b} \in \Sigma_{i}(P)$ with $a_{0}=b_{0} $ we define $ [ \bar{a},\bar{b}]^{i}\in \Sigma_{i}(P)$ by
\begin{equation*}
 [ \bar{a},\bar{b}]^{i}_{j} =
\begin{cases}
        a_{j}  & \mbox{for  } j\geq 0 \\
        	b_{j}  & \mbox{for  } j\leq 0 .
        \end{cases}
\end{equation*}
If $\bar{c} =[ \bar{a},\bar{b}]^{i}$, we have  \[d(\mathcal{F}_{i}^{j}(\theta_{i}(\bar{c} )),\mathcal{F}_{i}^{j}(\theta_{i}(\bar{a} )))\leq 2\beta \text{ for }j\geq 0\quad\text{and}\quad d(\mathcal{F}_{i}^{j}(\theta_{i}(\bar{c} )),\mathcal{F}_{i}^{j}(\theta_{i}(\bar{b}  )))\leq 2\beta \text{ for }j\leq 0.\]
It follows from Proposition \ref{dasddd} that $\theta_{i}(\bar{c})\in \mathcal{W}^{s}(\theta_{i}(\bar{a}),2\beta)\cap \mathcal{W} ^{u}(\theta_{i}(\bar{b}), 2\beta)=[\theta_{i}(\bar{a}),\theta_{i}(\bar{b})]$. This fact proves that   \begin{equation}\label{eefe} \theta_{i}([ \bar{a},\bar{b}]^{i}) =[\theta_{i}(\bar{a}),\theta_{i}(\bar{b})].\end{equation}

 For each $i\in\mathbb{Z}$ and $k=1,\dots,r$, set $$T_{k}^{i}=\{ \theta_{i}(\bar{a}):\bar{a}=(\dots, a_{-1},a_{0},a_{1},\dots)\in \Sigma_{i}(P),a_{0}=p_{k}\}.$$

 \begin{lemma}\label{primlemma} $T_{k}^{i}$ is a rectangle. \end{lemma}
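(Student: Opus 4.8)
The plan is to show that $T_k^i$ is a rectangle by verifying directly from the definition: given any two points $x, y \in T_k^i$, I must show that $[x,y]$ is defined and belongs to $T_k^i$. The natural strategy is to lift the situation to the shift space $\Sigma_i(P)$, where the bracket operation $[\cdot,\cdot]^i$ is combinatorially transparent, and then push forward through $\theta_i$ using the key intertwining identity \eqref{eefe}.

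Let me sketch the structure of the argument I would write out:

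\begin{proof}
Let $x, y \in T_k^i$. By definition there exist sequences $\bar a, \bar b \in \Sigma_i(P)$ with $a_0 = b_0 = p_k$ such that $x = \theta_i(\bar a)$ and $y = \theta_i(\bar b)$. Since $a_0 = b_0$, the concatenation $\bar c = [\bar a, \bar b]^i$ is a well-defined element of $\Sigma_i(P)$, given by
\[
c_j = \begin{cases} a_j & \text{for } j \geq 0, \\ b_j & \text{for } j \leq 0. \end{cases}
\]
In particular $c_0 = a_0 = p_k$, so $\theta_i(\bar c) \in T_k^i$. On the other hand, by the identity \eqref{eefe} we have
\[
\theta_i(\bar c) = \theta_i([\bar a, \bar b]^i) = [\theta_i(\bar a), \theta_i(\bar b)] = [x, y].
\]
Hence $[x,y]$ is defined and $[x,y] = \theta_i(\bar c) \in T_k^i$. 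Since $x, y \in T_k^i$ were arbitrary, $T_k^i$ is a rectangle.
\end{proof}

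\textbf{Where the work lies.} The lemma itself is short because nearly all the substance has been pre-packaged in the identity \eqref{eefe}, which was established just before the statement using Proposition \ref{dasddd} (characterizing local stable/unstable sets via uniform closeness of forward/backward iterates). The only genuine point to check is that the bracket $[\theta_i(\bar a), \theta_i(\bar b)]$ is actually \emph{defined} for arbitrary $x, y \in T_k^i$ — that is, that the canonical-coordinates map $[\cdot,\cdot]$ applies. This is guaranteed because $\theta_i(\bar c)$ lands in $\mathcal{W}^s(\theta_i(\bar a), 2\beta) \cap \mathcal{W}^u(\theta_i(\bar b), 2\beta)$ by Proposition \ref{dasddd}, and $2\beta$ was chosen small (in the Markov-partition setup $\beta$ is taken ``very small''), so this intersection is the single point furnished by Theorem A. I expect the main conceptual obstacle — already handled upstream — to be precisely the verification of \eqref{eefe}: one must confirm that the forward coordinates of $\bar c$ agreeing with $\bar a$ forces $\theta_i(\bar c)$ onto the local stable manifold of $\theta_i(\bar a)$, and symmetrically for the unstable direction, which relies on the shadowing estimate $d(\mathcal{F}_i^j(\theta_i(\bar c)), \mathcal{F}_i^j(\theta_i(\bar a))) \leq 2\beta$ holding for all $j \geq 0$. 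Given that identity, the rectangle property reduces to the purely symbolic observation that the combinatorial bracket preserves the constraint $a_0 = p_k$.
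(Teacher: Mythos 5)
Your proof is correct and follows essentially the same route as the paper: lift $x,y$ to sequences $\bar a,\bar b\in\Sigma_i(P)$ with $a_0=b_0=p_k$, form the symbolic bracket $\bar c=[\bar a,\bar b]^i$, note $c_0=p_k$, and conclude via the identity \eqref{eefe} that $[x,y]=\theta_i(\bar c)\in T_k^i$. Your closing remarks correctly locate the real content in \eqref{eefe} and Proposition \ref{dasddd}, which is exactly how the paper structures the argument.
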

\begin{proof}We   prove  that if  $x,y\in T_{k}^{i}$ then  $[x,y] \in T_{k}^{i}.$ Take $x=\theta_{i}(\bar{a}),$ $y=\theta_{i}(\bar{b})\in T_{k}^{i}$ (thus $a_{0}=p_{k}=b_{0}$).   If  $\bar{c}=[\bar{a},\bar{b}]^{i}$, then $c_{0}=p_{k}$, that is, $\theta_{i}(\bar{c})\in T_{k}^{i}$.   It follows from \eqref{eefe} that $[x,y]=\theta_{i} (\bar{c})\in T_{k}^{i}.$\end{proof}

\begin{lemma} If  $x \in T_{j}^{i}$ and $f_{i}(x) \in  T_{k}^{i+1}$, then
\[\mathcal{W}^{u}(f_{i}(x), T_{k}^{i+1})\subseteq f_{i}(\mathcal{W}^{u}(x, T_{j}^{i}))
\quad \text{and}\quad
f_{i}(\mathcal{W}^{s} (x, T_{j}^{i})) \subseteq \mathcal{W}^{s}(f_{i}(x), T_{k}^{i+1} ).\]\end{lemma}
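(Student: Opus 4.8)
The plan is to prove the two inclusions symmetrically, establishing the stable one by pushing forward with $f_i$ and the unstable one by pulling back with $f_i^{-1}$; I will carry out the stable case and treat the unstable case as its dual, replacing $\mathcal F$ by $\mathcal F^{-1}$, Theorem \ref{variedadeestavel} by Theorem \ref{variedadeinstave}, and the forward half of Proposition \ref{dasddd} by its backward half. Throughout I work with codings: by definition of $T_j^i$ and $T_k^{i+1}$ I may fix $\bar a,\bar b\in\Sigma_i(P)$ with $a_0=b_0=p_j$, $\theta_i(\bar a)=x$, $\theta_i(\bar b)=y$, and a coding $\bar d\in\Sigma_{i+1}(P)$ with $d_0=p_k$ and $\theta_{i+1}(\bar d)=f_i(x)$.

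For the stable inclusion take $y\in\mathcal W^s(x,T_j^i)\subseteq\mathcal W^s(x,\epsilon)\cap T_j^i$. The geometric half is immediate: since $y\in\mathcal W^s(x,\epsilon)$, the invariance in Theorem \ref{variedadeestavel}(iii) gives $f_i(y)=\mathcal F(y)\in\mathcal W^s(\mathcal F(x),\epsilon)=\mathcal W^s(f_i(x),\epsilon)$. It remains to show $f_i(y)\in T_k^{i+1}$, that is, to produce a coding of $f_i(y)$ whose zeroth symbol is $p_k$. The natural coding $\sigma_i(\bar b)$ of $f_i(y)$ already records its past, but its zeroth symbol is $b_1\neq p_k$ in general, so I would splice: define $\bar c=(c_n)_{n\in\mathbb Z}$, $c_n\in P$, by $c_n=d_n$ for $n\geq 0$ and $c_n=b_{n+1}$ for $n\leq -1$, so that $c_0=p_k$, the forward part is inherited from the $p_k$-coding $\bar d$ of $f_i(x)$, and the backward part from $\sigma_i(\bar b)$.

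Two checks then remain. First, that $\bar c\in\Sigma_{i+1}(P)$: every consecutive pair $(c_n,c_{n+1})$ with $n\geq 0$ is a joint of $\bar d$ and every pair with $n\leq -2$ is a joint of $\bar b$, so the only new constraint is the splice joint at $n=-1$, namely $d(f_i(p_j),p_k)<\alpha$. Second, that $f_i(y)$ shadows $\bar c$: for $n\geq 0$ I combine the stable contraction $d(\mathcal F^{\,n}(f_i(y)),\mathcal F^{\,n}(f_i(x)))\leq K^s\zeta^{\,n}d(f_i(y),f_i(x))$ from Theorem \ref{variedadeestavel}(iv) with $d(\mathcal F^{\,n}(f_i(x)),d_n)<\beta$, and for $n\leq -1$ I use that $\sigma_i(\bar b)$ is $\beta$-shadowed by $f_i(y)$; together these show $f_i(y)$ is a $2\beta$-shadower of $\bar c$. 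As $\beta$ is taken very small, expansiveness (Theorem B) forces the $2\beta$-shadower to be unique, whence $\theta_{i+1}(\bar c)=f_i(y)$, and $c_0=p_k$ yields $f_i(y)\in T_k^{i+1}$, closing the stable inclusion. The unstable inclusion runs identically with the spliced sequence $\bar g\in\Sigma_i(P)$ given by $g_n=a_n$ for $n\leq 0$ and $g_n=d_{n-1}$ for $n\geq 1$ (so $g_0=p_j$), using Theorem \ref{variedadeinstave}(iii)--(iv) and the backward half of Proposition \ref{dasddd}; its splice joint is again $d(f_i(p_j),p_k)<\alpha$.

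The main obstacle is precisely this splice joint $d(f_i(p_j),p_k)<\alpha$: it is the one estimate not inherited for free from the given codings, and it is exactly what forces the spliced sequence to be an admissible pseudo-orbit. I would control it through $d(f_i(p_j),p_k)\leq d(f_i(p_j),f_i(x))+d(f_i(x),p_k)$, where $d(x,p_j)<\beta$ and $d(f_i(x),p_k)<\beta$ come from the $\beta$-shadowing built into membership in $T_j^i$ and $T_k^{i+1}$, the first term is estimated by the uniform bound $\sup_i\|Df_i\|<\infty$ available on $\mathcal A_b^2(\mathcal M)$ together with the modulus of continuity \eqref{eqi1}, and the whole inequality is made to close by the compatibility $\gamma<\min\{\beta,\alpha/2\}$ of the constants fixed before the statement. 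This is exactly the step that in the stationary theory is absorbed into the choice of the dense set $P$ and the shadowing constants, and it is where the hypothesis $\sup_i\|Df_i\|_{C^2}<\infty$ is genuinely used; everything else is the invariance of the local stable and unstable manifolds and the uniqueness of the shadowing point.
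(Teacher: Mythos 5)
Your overall machinery---codings, a spliced pseudo-orbit, and uniqueness of the shadowing point via expansiveness---is the same as the paper's, and your geometric half (that $y\in\mathcal{W}^{s}(x,\varepsilon)$ forces $f_{i}(y)\in\mathcal{W}^{s}(f_{i}(x),\varepsilon)$) is exactly what the paper uses. But the proof breaks at the step you yourself flag as ``the main obstacle'': the splice joint $d(f_{i}(p_{j}),p_{k})<\alpha$ cannot be verified, and your proposed verification is not just incomplete but impossible. Membership in $T_{j}^{i}$ and $T_{k}^{i+1}$ only gives the shadowing bounds $d(x,p_{j})<\beta$ and $d(f_{i}(x),p_{k})<\beta$; since $\gamma<\beta$, the bound $d(x,p_{j})<\beta$ does not let you invoke \eqref{eqi1} (which requires distance $<\gamma$), so the best available estimate is $d(f_{i}(p_{j}),p_{k})<\bigl(1+\sup_{i}\Vert Df_{i}\Vert\bigr)\beta$. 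For this to be $<\alpha$ you would need $\bigl(1+\sup_{i}\Vert Df_{i}\Vert\bigr)\beta<\alpha$, and that inequality can never hold: since every $\alpha$-pseudo-orbit is $\beta$-shadowed (this is how $\alpha$ is produced from $\beta$ by Theorem C), a pseudo-orbit with a single jump of size $s<\alpha$ at the origin and a shadowing point $y$ satisfies $s=d(f_{0}(x_{0}),x_{1})\leq d(f_{0}(x_{0}),f_{0}(y))+d(f_{0}(y),x_{1})<\bigl(1+\sup_{i}\Vert Df_{i}\Vert\bigr)\beta$, whence $\alpha\leq\bigl(1+\sup_{i}\Vert Df_{i}\Vert\bigr)\beta$. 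In short, splicing the past of $y$ against the future of $f_{i}(x)$ forces a jump across the non-matching symbol pair $(p_{j},p_{k})$, and admissibility of that jump is not recoverable from the constants.

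The paper avoids any new joint by splicing at a \emph{matching} symbol. It reads the hypothesis as providing a single coding $\bar{a}$ of $x$ with $a_{0}=p_{j}$ \emph{and} $a_{1}=p_{k}$ (this is also the form in which the lemma is actually applied inside the Claim of Theorem \ref{Final}, where such a coding is given), takes a coding $\bar{b}$ of $y$ with $b_{0}=p_{j}$, and forms $[\bar{a},\bar{b}]^{i}$: future from $\bar{a}$, past from $\bar{b}$. Because the two codings agree at position $0$, every consecutive pair of $[\bar{a},\bar{b}]^{i}$ is already a pair of $\bar{a}$ or of $\bar{b}$, so no admissibility condition arises at all. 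Then \eqref{eefe} gives $\theta_{i}([\bar{a},\bar{b}]^{i})=[x,y]=y$, and the conjugacy $f_{i}\circ\theta_{i}=\theta_{i+1}\circ\sigma_{i}$ yields $f_{i}(y)=\theta_{i+1}(\sigma_{i}([\bar{a},\bar{b}]^{i}))\in T_{k}^{i+1}$, since the zeroth symbol of $\sigma_{i}([\bar{a},\bar{b}]^{i})$ is $a_{1}=p_{k}$; combined with $f_{i}(y)\in\mathcal{W}^{s}(f_{i}(x),\varepsilon)$ this closes the stable inclusion, and the unstable one is symmetric. To repair your argument, replace the forward half of your splice: take it from the coding of $x$, not of $f_{i}(x)$---i.e.\ adopt the bracket $[\bar{a},\bar{b}]^{i}$. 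Note finally that your unstable-case sequence $\bar{g}$ is precisely an attempt to manufacture the common coding of $x$ by re-splicing, and it fails for the same reason; the common coding has to come from the hypothesis (as the paper uses it), not from a new splice.
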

\begin{proof}
Since $x \in T_{j}^{i}$ and $f_{i}(x) \in  T_{k}^{i+1}$, we have   $x=\theta_{i}(\bar{a})$ with $a_{0}=p_{j}$ and $a_{1}=p_{k}$, because $f_{i}(\theta_{i}( \bar{a}))=\theta_{i+1}(\sigma_{i}(\bar{a}))\in T_{k}^{i+1}$. Take  $y\in \mathcal{W}^{s}(x,T_{j}^{i})=\mathcal{W}^{s}(x,\varepsilon)
\cap T_{j}^{i}$. Then, we can write  $y=\theta_{i}(\bar{b})$, with  $b_{0}=p_{j}$.
Therefore \[y=[x,y]=\theta_{i}([\bar{a},\bar{b}]^{i})\quad \text{ and thus}\quad f_{i}(y)=f_{i}(\theta_{i}(\bar{b}))=\theta_{i+1}\sigma_{i}  ( [\bar{a},\bar{b}]^{i})\in T_{k}^{i+1},\]
because     $a_{1}=p_{k}$. Since $y\in \mathcal{W}^{s}(x,\varepsilon)$, we have $ f_{i}(y)\in \mathcal{W}^{s}(f_{i}(x),\varepsilon)$. Therefore,    $f_{i}(y)\in   \mathcal{W}^{s}(f_{i}(x),T_{k}^{i+1})$. We have proved
 \begin{equation}\label{erwer123}
     f_{i}(\mathcal{W}^{s}(x,T_{j}^{i}))\subseteq   \mathcal{W}^{s}(f_{i}(x),T_{k}^{i+1}).
 \end{equation}
 Analogously we can prove  
 \begin{equation}\label{erwer1232}  \mathcal{W}^{u}(f_{i} (x),T_{k}^{i+1})\subseteq f_{i}(\mathcal{W}^{u}(x,T_{j}^{i})),\end{equation} which proves the lemma. \end{proof}

\begin{lemma}   $T_{k}^{i}$ is closed and $J^{i}=\{T_{1}^{i},\dots T_{r}^{i}\}$ is a covering  of $M_{i}$.\end{lemma}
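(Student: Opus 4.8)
The plan is to deduce both assertions from the compactness of the symbol spaces $\Sigma_{i}(P)$ together with the continuity and surjectivity of $\theta_{i}$ already established, via the standard topological principles that a continuous image of a compact set is compact and that a compact subset of a Hausdorff space is closed.

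First I would observe that $\Sigma_{0}(P)$ is compact. Since $P=\{p_{1},\dots,p_{r}\}$ is finite, the full product $\prod_{-\infty}^{\infty}P$ is compact in the product (``compact'') topology. For a fixed $n$, the defining condition $d(f_{n}(a_{n}),a_{n+1})<\alpha$ depends only on the pair $(a_{n},a_{n+1})$, which ranges over the finite set $P\times P$; hence the set of sequences satisfying it is a finite union of cylinders and is therefore clopen. Intersecting over all $n\in\mathbb{Z}$ exhibits $\Sigma_{0}(P)$ as a closed subset of the compact product, so it is compact. Consequently each $\Sigma_{i}(P)=\sigma^{i}(\Sigma_{0}(P))$ is compact, being the continuous image of a compact set under the shift.

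Next I would prove that $T_{k}^{i}$ is closed. Consider the cylinder $C_{k}^{i}=\{\bar{a}\in\Sigma_{i}(P):a_{0}=p_{k}\}$. Since the coordinate map $\bar{a}\mapsto a_{0}$ is continuous and $P$ is discrete, $C_{k}^{i}$ is clopen in $\Sigma_{i}(P)$, in particular compact. As $\theta_{i}$ is continuous and $T_{k}^{i}=\theta_{i}(C_{k}^{i})$ by definition, the set $T_{k}^{i}$ is the continuous image of a compact set, hence compact; since $M_{i}$ is a (Hausdorff) manifold, $T_{k}^{i}$ is closed.

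Finally, for the covering property I would use that every $\bar{a}\in\Sigma_{i}(P)$ has $a_{0}\in P$, so $\Sigma_{i}(P)=\bigcup_{k=1}^{r}C_{k}^{i}$. Applying $\theta_{i}$ and invoking its surjectivity gives
\[\bigcup_{k=1}^{r}T_{k}^{i}=\bigcup_{k=1}^{r}\theta_{i}(C_{k}^{i})=\theta_{i}\Big(\bigcup_{k=1}^{r}C_{k}^{i}\Big)=\theta_{i}(\Sigma_{i}(P))=M_{i},\]
so $J^{i}=\{T_{1}^{i},\dots,T_{r}^{i}\}$ is a covering of $M_{i}$. The only genuinely delicate point is the compactness of $\Sigma_{0}(P)$, which rests on reading the strict-inequality constraint as a locally constant (hence clopen) condition thanks to the finiteness of $P$; once this is secured, the remaining steps are routine applications of the compactness principles quoted above.
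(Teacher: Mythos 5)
Your proposal is correct and follows essentially the same route as the paper: the paper also writes $T_{k}^{i}=\theta_{i}(\Pi_{k}^{i})$ with $\Pi_{k}^{i}=\{\bar{a}\in \Sigma_{i}(P): a_{0}=p_{k}\}$ closed in the compact space $\Sigma_{i}(P)$, concludes closedness from continuity of $\theta_{i}$, and gets the covering from surjectivity of $\theta_{i}$ together with $\Sigma_{i}(P)=\bigcup_{j}\Pi_{j}^{i}$. The only difference is that you verify the compactness of $\Sigma_{0}(P)$ (via the clopen-cylinder reading of the strict inequality), which the paper simply takes as given when it endows $\Sigma_{0}(P)$ with the product topology; this is a welcome, not a divergent, addition.
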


\begin{proof} Since $T_{k}^{i}=\theta_{i}(\Pi_{k}^{i})$, where $\Pi_{k}^{i}=\{\bar{a}\in \Sigma_{i}(P): a_{0}=p_{k}\}$  is a closed subset of $\Sigma_{i}(P)$ and $\theta_{i}$ is continuous, we have   $T_{k}^{i}$ is closed  (note that $\Sigma_{i}(P)$ is compact). Furthermore, given that $\theta_{i}$ is surjective and $\Sigma_{i}(P)= \underset{j=1,\dots,r} \bigcup   \Pi_{j}^{i}$, we have  $J^{i}=\{T_{1}^{i},\dots T_{r}^{i}\}$ is a covering  of $M_{i}$. \end{proof}

Next, we  will build a first refinement of $J^{i}$,
since the interiors of the rectangles above could intersect. For $T_{j}^{i}\cap T_{k}^{i}\neq \emptyset$, let
\begin{align*}T_{j,k}^{i,1}&=\{x\in T_{j}^{i}:  \mathcal{W}^{u}(x,T_{j}^{i})\cap T_{k}^{i}\neq
\emptyset,  \mathcal{W}^{s}(x,T_{j}^{i})\cap T_{k}^{i}\neq
\emptyset \}=T_{j}^{i}\cap T_{k}^{i}\\
T_{j,k}^{i,2}&=\{x\in T_{j}^{i}:\mathcal{W}^{u}(x,T_{j}^{i})\cap T_{k}^{i}\neq  \emptyset , \mathcal{W}^{s}(x,T_{j}^{i})\cap T_{k}^{i}=  \emptyset\}\\
T_{j,k}^{i,3}&=\{x\in T_{j}^{i}: \mathcal{W}^{u}(x,T_{j}^{i})\cap T_{k}^{i}=\emptyset, \mathcal{W}^{s}(x,T_{j}^{i})\cap T_{k}^{i}\neq  \emptyset \} \\
T_{j,k}^{i,4}&=\{x\in T_{j}^{i}: \mathcal{W}^{u}(x,T_{j}^{i})\cap T_{k}^{i}= \emptyset, \mathcal{W}^{u}(x,T_{j}^{i})\cap T_{k}^{i} =  \emptyset \}.
 \end{align*}
\begin{lemma} For $n=1,2,3,4$,  $T^{i,n}_{j,k}$ is a rectangle.\end{lemma}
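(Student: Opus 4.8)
The plan is to reduce all four cases to a single product-structure identity for the bracket and then settle each case by a uniform bookkeeping argument. Fix $x,y\in T_{j}^{i}$. Since $T_{j}^{i}$ is a rectangle (Lemma \ref{primlemma}), the point $z:=[x,y]$ is defined and belongs to $T_{j}^{i}$, and by construction $z\in\mathcal{W}^{s}(x,\epsilon)\cap\mathcal{W}^{u}(y,\epsilon)$. The first fact I would record is that $z$ lies on the stable leaf of $x$ and on the unstable leaf of $y$, which yields the two slice identities
\[
\mathcal{W}^{s}([x,y],T_{j}^{i})=\mathcal{W}^{s}(x,T_{j}^{i})\quad\text{and}\quad \mathcal{W}^{u}([x,y],T_{j}^{i})=\mathcal{W}^{u}(y,T_{j}^{i}).
\]
Indeed, $z\in\mathcal{W}^{s}(x,\epsilon)\cap T_{j}^{i}=\mathcal{W}^{s}(x,T_{j}^{i})$, so $\mathcal{W}^{s}(z,\epsilon)$ and $\mathcal{W}^{s}(x,\epsilon)$ are pieces of one and the same local stable manifold and therefore coincide after intersecting with the small rectangle $T_{j}^{i}$; the unstable identity is symmetric, using $z\in\mathcal{W}^{u}(y,\epsilon)$. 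This is exactly the product picture appearing in the proof of Theorem \ref{coordenadas} (Figure \ref{figure1}), and it can be verified as in the single case \cite{Z03}.

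With these identities the four cases are immediate. For $n=1$ I would simply invoke Lemma \ref{lema63}(iii): since $T_{j}^{i}\cap T_{k}^{i}\neq\emptyset$ by hypothesis, $T_{j,k}^{i,1}=T_{j}^{i}\cap T_{k}^{i}$ is the intersection of two rectangles, hence a rectangle. For $n\in\{2,3,4\}$ I would take $x,y\in T_{j,k}^{i,n}$, set $z=[x,y]\in T_{j}^{i}$, and use the identities to get
\[
\mathcal{W}^{u}(z,T_{j}^{i})\cap T_{k}^{i}=\mathcal{W}^{u}(y,T_{j}^{i})\cap T_{k}^{i},\qquad \mathcal{W}^{s}(z,T_{j}^{i})\cap T_{k}^{i}=\mathcal{W}^{s}(x,T_{j}^{i})\cap T_{k}^{i}.
\]
Thus the unstable slice of $z$ meets $T_{k}^{i}$ precisely when that of $y$ does, and the stable slice of $z$ meets $T_{k}^{i}$ precisely when that of $x$ does. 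Since $x$ and $y$ belong to the same set $T_{j,k}^{i,n}$, they impose identical (non)emptiness conditions on these slices, so $z$ satisfies both defining conditions of $T_{j,k}^{i,n}$, giving $[x,y]\in T_{j,k}^{i,n}$. Hence each $T_{j,k}^{i,n}$ is a rectangle. I would note in passing that in the printed definition of $T_{j,k}^{i,4}$ the second clause should read $\mathcal{W}^{s}(x,T_{j}^{i})\cap T_{k}^{i}=\emptyset$; with that reading $n=4$ is the case in which both slices miss $T_{k}^{i}$, handled by the same argument.

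The case split itself is purely formal, driven entirely by the slogan ``the stable coordinate of $[x,y]$ comes from $x$ and the unstable coordinate from $y$.'' The main obstacle is therefore the single geometric ingredient: justifying the two leaf-identities rigorously in the non-stationary setting, where $\mathcal{M}$ is non-compact and the local manifolds are controlled only uniformly, through the $\mathcal{A}^{2}_{b}(\mathcal{M})$ bounds. I would make this precise by passing to the exponential chart at the relevant point and invoking Theorems \ref{variedadeinstave} and \ref{variedadeestavel} to realize $\mathcal{W}^{s}$ and $\mathcal{W}^{u}$ as $\alpha$-Lipschitz graphs over the orthogonal stable and unstable subspaces, so that the bracket is a genuine transverse intersection depending only on the respective coordinates; the continuity of $[\cdot,\cdot]$ on $\mathcal{U}_{\delta}$ then guarantees that after restriction to $T_{j}^{i}$ the slices of $z$ agree with those of $x$ and $y$. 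Once this product structure is secured uniformly in $i$, the lemma is formal.
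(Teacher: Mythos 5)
Your proposal is correct and takes essentially the same route as the paper: both rest on Lemma \ref{primlemma} together with the slice identities $\mathcal{W}^{s}([x,y],T_{j}^{i})=\mathcal{W}^{s}(x,T_{j}^{i})$ and $\mathcal{W}^{u}([x,y],T_{j}^{i})=\mathcal{W}^{u}(y,T_{j}^{i})$, which follow from $[x,y]\in\mathcal{W}^{s}(x,\varepsilon)\cap\mathcal{W}^{u}(y,\varepsilon)$, after which $[x,y]\in T^{i,n}_{j,k}$ is a formal bookkeeping step (the paper, like you, defers the fine verification to the single-map case in \cite{Aoki}). Your separate dispatch of $n=1$ via Lemma \ref{lema63}(iii) and your correction of the evident typo in the definition of $T^{i,4}_{j,k}$ are harmless refinements of the same argument.
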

 \begin{proof} Fix $x,y\in T_{j,k}^{i,n}$. Thus $x,y\in T_{j}^{i}$ and therefore $[x,y]\in T_{j}^{i}$ (Lemma \ref{primlemma}).  Given that $[x,y]\in \mathcal{W}^{s}(x,\varepsilon),$ then  \[\mathcal{W}^{s}([x,y],T_{j}^{i})=\mathcal{W}^{s}([x,y],\varepsilon)\cap T_{j}^{i}=\mathcal{W}^{s}(x,\varepsilon) \cap T_{j}^{i}=\mathcal{W}^{s}(x,T_{j}^{i})\]
   and since $[x,y]\in \mathcal{W}^{u}(y,\varepsilon),$ then  \[\mathcal{W}^{u}([x,y],T_{j}^{i})=\mathcal{W}^{u}([x,y],\varepsilon)\cap T_{j}^{i}=\mathcal{W}^{u}(y,\varepsilon) \cap T_{j}^{i}=\mathcal{W}^{u}(y,T_{j}^{i}).\]
   These facts  imply that $[x,y] \in  T^{i,n}_{j,k}$ and hence  $T^{i,n}_{j,k}$ is a rectangle for $n=1,2,3,4$ (see \cite{Aoki}, Remark 4.2.3, for more detail in the single case, which work for families).\end{proof}

For each $x\in M_{i}$, set
\begin{align*} J^{i}(x)&=\{T_{j}^{i}\in J_{i}:x\in T_{j}^{i}\}\\
 J^{i}_{\ast}(x)&=\{T_{k}^{i}\in J^{i}: T_{k}^{i}\cap T_{j}^{i}\neq \emptyset\text{ for some }T_{j}^{i}\in J^{i}(x)\}\\
 Z^{i}&=M_{i}\setminus  \cup_{j=1}^{r} \partial T_{j}^{i}\\
   Z^{i}_{\ast}&=\{ x\in M_{i}: \mathcal{W}^{s}(x,\varepsilon) \cap  \partial ^{s}T_{k}^{i}=\emptyset \text{ and } \mathcal{W}^{u}(x,\varepsilon)\cap  \partial^{u}  T_{k}^{i}=\emptyset \text{ for all } T_{k}^{i}\in J^{i}_{\ast}(x)\}.
\end{align*}

Since $J^{i}$ is a closed cover of $ M_{i}$, we have  $Z^{i}$ is an open dense subset of $M_{i}$.
 The proof for single maps works to prove that   $Z^{i}_{\ast}$  is open and dense in $M_{i}$ (see \cite{Aoki}), Lemma 4.2.1). Furthermore,   each $x\in   Z_{\ast}^{i}$ lies in $\text{int} (T^{i,n}_{j,k})$ for some $n$ (see \cite{Aoki}, Remark 4.2.5).

\medskip

For $x\in Z_{\ast}^{i}$ define \[R^{i}(x)=\bigcap \left\{\text{int}( T^{i,n}_{j,k}):  x\in T_{j}^{i},T_{k}^{i}\cap T_{j}^{i}\neq \emptyset\text{  and } x\in T^{i,n}_{j,k}\right\}.\]
By Lemma \ref{lema63} we have $R^{i}(x)$ is an open rectangle ($R^{i}(x)$ is a finite intersection of open subsets). Consequently, $\overline{R^{i}(x)}$ is proper.

\begin{lemma} For any  $y\in R^{i}(x)\cap Z^{i}_{\ast}$, we have $J^{i}(x)= J^{i}(y)$ and $R^{i}(y)=R^{i}(x)$. \end{lemma}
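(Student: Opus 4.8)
The plan is to show that the entire \emph{combinatorial type} of a point is locally constant on $R^{i}(x)\cap Z^{i}_{\ast}$: namely the list $J^{i}(x)$ of rectangles containing it, together with, for each relevant pair $(l,k)$, the index $n$ of the piece $T^{i,n}_{l,k}$ in which it lies. Since $R^{i}(\cdot)$ is by definition the intersection of the interiors $\text{int}(T^{i,n}_{l,k})$ singled out by exactly this type data, and since membership $x\in Z^{i}_{\ast}$ guarantees (via the cited Remark) that $R^{i}(x)$ is a genuine neighbourhood of $x$, constancy of the type forces $R^{i}(y)=R^{i}(x)$. The whole argument rests on one structural fact: for each pair $(l,k)$ with $T^{i}_{k}\cap T^{i}_{l}\neq\emptyset$, the four sets $T^{i,1}_{l,k},\dots,T^{i,4}_{l,k}$ are pairwise disjoint and cover $T^{i}_{l}$ (they are cut out by the two independent conditions ``$\mathcal{W}^{u}(\cdot,T^{i}_{l})$ meets $T^{i}_{k}$'' and ``$\mathcal{W}^{s}(\cdot,T^{i}_{l})$ meets $T^{i}_{k}$''), together with the identity $T^{i,1}_{l,k}=T^{i}_{l}\cap T^{i}_{k}$.

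First I would establish $J^{i}(x)\subseteq J^{i}(y)$. If $x\in T^{i}_{j}$, then taking $l=k=j$ shows that $\text{int}(T^{i,1}_{j,j})=\text{int}(T^{i}_{j})$ is one of the factors in the intersection defining $R^{i}(x)$, since $x\in T^{i,1}_{j,j}=T^{i}_{j}$ and $T^{i}_{j}\cap T^{i}_{j}\neq\emptyset$. Hence $R^{i}(x)\subseteq T^{i}_{j}$, and in particular $y\in T^{i}_{j}$, so $T^{i}_{j}\in J^{i}(y)$.

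The reverse inclusion $J^{i}(y)\subseteq J^{i}(x)$ is the main obstacle, and it is precisely where the partition property is decisive. Suppose, for contradiction, that $y\in T^{i}_{j}$ but $x\notin T^{i}_{j}$. Fix any $l\in J^{i}(x)$; by the previous step $y\in R^{i}(x)\subseteq T^{i}_{l}$, so $y\in T^{i}_{l}\cap T^{i}_{j}$ and thus $T^{i}_{l}\cap T^{i}_{j}\neq\emptyset$. Since $x\in T^{i}_{l}$, $x\notin T^{i}_{j}$, and $T^{i,1}_{l,j}=T^{i}_{l}\cap T^{i}_{j}\subseteq T^{i}_{j}$, the point $x$ cannot lie in $T^{i,1}_{l,j}$; by the covering property $x\in T^{i,n}_{l,j}$ for some $n\in\{2,3,4\}$. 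Consequently $\text{int}(T^{i,n}_{l,j})$ is a factor of $R^{i}(x)$, whence $y\in R^{i}(x)\subseteq T^{i,n}_{l,j}$. But $y\in T^{i}_{l}\cap T^{i}_{j}=T^{i,1}_{l,j}$, so $y$ would belong to two distinct pieces of the partition of $T^{i}_{l}$, which is absurd. Therefore $x\in T^{i}_{j}$, and $J^{i}(x)=J^{i}(y)=:J$.

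Finally I would conclude $R^{i}(y)=R^{i}(x)$. With $J$ fixed, the admissible pairs in the two defining intersections (those $(l,k)$ with $l\in J$ and $T^{i}_{k}\cap T^{i}_{l}\neq\emptyset$) coincide. For each such pair both $x$ and $y$ lie in $T^{i}_{l}$, hence each lies in a \emph{unique} piece $T^{i,n(x,l,k)}_{l,k}$, respectively $T^{i,n(y,l,k)}_{l,k}$. Repeating the partition argument once more --- $R^{i}(x)\subseteq\text{int}(T^{i,n(x,l,k)}_{l,k})$ forces $y$ into that piece, and $y$ lies in only one piece --- gives $n(y,l,k)=n(x,l,k)$ for every admissible pair. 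Thus the two intersections are taken over the identical family of sets, and so $R^{i}(y)=R^{i}(x)$.
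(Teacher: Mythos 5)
Your proof is correct: the paper itself gives no argument here (it simply defers to Aoki--Hiraide, Remark 4.2.6), and what you have written out is exactly the standard Bowen-type combinatorial argument that the reference contains --- using that the four sets $T^{i,n}_{l,k}$ partition $T^{i}_{l}$, that $T^{i,1}_{l,k}=T^{i}_{l}\cap T^{i}_{k}$, and that each factor $\text{int}(T^{i,n}_{l,k})$ of $R^{i}(x)$ traps $y$ in the same piece as $x$. So your proposal supplies, correctly and completely, the details the paper outsources.
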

\begin{proof}See \cite{Aoki}, Remark 4.2.6.
\end{proof}


Therefore, there are only finitely many distinct $R^{i}(x)$'s. Let
 \[\mathcal{R}^{i}=\{  \overline{R^{i}(x)}: x\in Z_{\ast}^{i}\}=\{R^{i}_{1},\dots,R^{i}_{m_{i}}\}\quad \text{for }i\in\mathbb{Z}. \]

Finally we prove that:

  \begin{theorem} \label{Final} The sequence $\mathcal{R}^{i}$ for $i\in\mathbb{Z}$ is a Markov partition for $\mathcal{F}$.
 \end{theorem}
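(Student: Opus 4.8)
The plan is to verify, one clause at a time, that the sequence $\mathcal{R}^{i}=\{\overline{R^{i}(x)}:x\in Z_{\ast}^{i}\}$ meets every requirement of Definition \ref{Markov}, saving the Markov condition itself for last. That each $\overline{R^{i}(x)}$ is a proper rectangle is already in hand, so the first task is the partition structure. Since every $x\in Z_{\ast}^{i}$ lies in its own $R^{i}(x)$ and $Z_{\ast}^{i}$ is dense in $M_{i}$, the open rectangles $R^{i}(x)$ cover the dense set $Z_{\ast}^{i}$, and taking closures gives $\bigcup_{x}\overline{R^{i}(x)}=M_{i}$. For disjointness of interiors I would use the preceding lemma: if two of the open rectangles met, their open intersection would contain a point of the dense set $Z_{\ast}^{i}$, forcing the two rectangles to coincide; hence the distinct $R^{i}(x)$ are pairwise disjoint open sets and the closures $\overline{R^{i}(x)}$ have pairwise disjoint interiors. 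The bound $\max_{i}\text{Card}(\mathcal{R}^{i})<\infty$ is exactly where the hypothesis $M_{i}=M\times\{i\}$ enters: the set $P$ is fixed with $r$ points, so for every $i$ there are at most $r$ rectangles $T_{k}^{i}$, at most $4r^{2}$ refinement pieces $T^{i,n}_{j,k}$, and therefore only boundedly many distinct finite intersections $R^{i}(x)$, the bound depending on $r$ alone and not on $i$.

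The remaining and most substantial step is the Markov condition, which I would prove first in the interior, for the open rectangles, and then pass to closures. Fix $x\in Z_{\ast}^{i}$ with $f_{i}(x)\in Z_{\ast}^{i+1}$ and let $R^{i}(x)$, $R^{i+1}(f_{i}(x))$ be the corresponding open rectangles. The goal is the stable inclusion
\[f_{i}\big(\mathcal{W}^{s}(x,R^{i}(x))\big)\subseteq \mathcal{W}^{s}\big(f_{i}(x),R^{i+1}(f_{i}(x))\big)\]
together with the unstable inclusion
\[\mathcal{W}^{u}\big(f_{i}(x),R^{i+1}(f_{i}(x))\big)\subseteq f_{i}\big(\mathcal{W}^{u}(x,R^{i}(x))\big).\]
The input is the Markov condition already established for the covering rectangles, namely $f_{i}(\mathcal{W}^{s}(x,T_{j}^{i}))\subseteq\mathcal{W}^{s}(f_{i}(x),T_{k}^{i+1})$ and the unstable analogue. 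Writing $R^{i}(x)$ as the finite intersection $\bigcap\text{int}(T^{i,n}_{j,k})$, the slice $\mathcal{W}^{s}(x,R^{i}(x))$ is the intersection of the slices $\mathcal{W}^{s}(x,\varepsilon)\cap\text{int}(T^{i,n}_{j,k})$; applying $f_{i}$ contracts along the stable direction by Theorem \ref{variedadeestavel}, so the image lies in $\mathcal{W}^{s}(f_{i}(x),\varepsilon)$, and the Markov property of the $T$'s forces it into the matching refinement pieces defining $R^{i+1}(f_{i}(x))$. The unstable inclusion is symmetric, using Theorem \ref{variedadeinstave} and $f_{i}^{-1}$.

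The hard part is precisely this last clause, tracking which refinement pieces the image of a stable slice falls into so as to conclude that it remains inside $R^{i+1}(f_{i}(x))$; this is where the geometry of the refinement and the $s$- and $u$-boundary conditions built into the definition of $Z_{\ast}^{i}$ must be used, and it is the only genuinely nonroutine verification. I would carry it out exactly as in the single-map setting of \cite{Aoki} (Chapter 4) and \cite{Z03} (Theorem 3.12), the point being that the only analytic inputs are the uniform contraction and expansion estimates of Theorems \ref{variedadeestavel} and \ref{variedadeinstave}, available because $\mathcal{F}\in\mathcal{A}_{b}^{2}(\mathcal{M})$, together with the continuity of $[\cdot,\cdot]$. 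Finally, to obtain the Markov condition for the closed rectangles $R^{i}_{\ell}=\overline{R^{i}(x)}$ appearing in the statement, I would extend the interior inclusions to the closures by a limiting argument, invoking the density of $Z_{\ast}^{i}$ and the continuous dependence of $\mathcal{W}^{s}(\cdot,\varepsilon)$ and $\mathcal{W}^{u}(\cdot,\varepsilon)$ on the base point, so that the closed inclusions required by Definition \ref{Markov} follow.
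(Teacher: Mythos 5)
Your outline does follow the same overall route as the paper---partition structure from density of $Z_{\ast}^{i}$, the Markov condition established first at ``good'' points via the Markov property of the covering rectangles $T_{j}^{i}$, then extended---and your treatment of coverage, disjointness of interiors, and the cardinality bound (where $M_{i}=M\times\{i\}$ enters) is sound. But the two places you compress are precisely where the content lies. The step you yourself call ``the hard part''---that for good points $y\in\mathcal{W}^{s}(x,\varepsilon)$ with $R^{i}(y)=R^{i}(x)$ one has $J^{i+1}(f_{i}(x))=J^{i+1}(f_{i}(y))$ and $R^{i+1}(f_{i}(x))=R^{i+1}(f_{i}(y))$---is exactly the Claim the paper proves in detail, and it cannot simply be cited as being ``exactly as in the single-map setting'': it is the one step where the family structure genuinely enters, since it compares rectangles living on different manifolds $M_{i}$ and $M_{i+1}$ and rests on the coding maps $\theta_{i}$, the commutation $\theta_{i+1}\circ\sigma_{i}=f_{i}\circ\theta_{i}$, and the inclusions \eqref{erwer123}--\eqref{erwer1232}; the paper's proof of part (ii) of the Claim is a nontrivial contradiction argument through the refinement pieces $T^{n,i+1}_{j,k}$ using the bracket, not a transcription of Aoki or Bowen.

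More seriously, your closing ``limiting argument'' is the wrong tool for the final step. The Markov condition must hold at \emph{every} $z\in R^{i}_{k}\cap f_{i}^{-1}(R^{i+1}_{j})$, including points with $z\in\partial R^{i}_{k}$ and $f_{i}(z)\in\partial R^{i+1}_{j}$. If you approximate such a $z$ by good points $z_{m}\in \text{int}(R^{i}_{k})\cap Z_{\ast}^{i}\cap f_{i}^{-1}(Z_{\ast}^{i+1})$, nothing forces $f_{i}(z_{m})$ to lie in $R^{i+1}_{j}$: the images may fall into an adjacent rectangle $R^{i+1}_{j'}$, and passing to the limit then yields an inclusion relative to $R^{i+1}_{j'}$, not the required one relative to $R^{i+1}_{j}$. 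The paper (following Bowen) resolves this not by a limit but by the bracket: it fixes one good $x$ with $\overline{R^{i}(x)}=R^{i}_{k}$ and $\overline{R^{i+1}(f_{i}(x))}=R^{i+1}_{j}$, writes $\mathcal{W}^{s}(z,R^{i}_{k})=\{[z,y]:y\in\mathcal{W}^{s}(x,R^{i}_{k})\}$, and uses the compatibility $f_{i}([z,y])=[f_{i}(z),f_{i}(y)]$---a consequence of \eqref{eefe}---to transfer the inclusion from the single base point $x$ to every such $z$. Density and continuity are invoked only once, to upgrade the inclusion along the slice through the fixed good $x$ from the dense set of good $y$'s to the closed slice $\mathcal{W}^{s}(x,\overline{R^{i}(x)})$ (here the sets $Y_{s}^{i}$, $Y_{u}^{i}$ are needed to guarantee that dense set exists). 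Without the bracket step your argument does not establish the Markov condition for the closed rectangles in $\mathcal{R}^{i}$.
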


\begin{proof}
We obtained that if $z\in Z_{\ast}^{i}$, then $R^{i}(z)=R^{i}(x)$ or $R^{i}(z)\cap R^{i}(x)=\emptyset$. Therefore  \[(\overline{R^{i}(x)}\setminus R^{i}(x))\cap Z_{\ast}^{i}=\emptyset .\]
Since $Z_{\ast}^{i}$ is dense in $M_{i}$, we have  $\overline{R^{i}(x)}\setminus R^{i}(x)$ has no interior in $M_{i}$ and $R^{i}(x)=\text{int}(\overline{R^{i}(x)})$. Therefore, for $R^{i}(x)\neq R^{i}(z)$, we have
 \[ \text{int}(\overline{R^{i}(x)})\cap \text{int}(\overline{R^{i}(z)}) =R^{i}(x)\cap R^{i}(z)=\emptyset. \]

\noindent \textbf{Claim:} If $x,y\in Z_{\ast}^{i}\cap f_{i}^{-1}(Z_{\ast}^{i+1})$, $R^{i}(x)=R^{i}(y)$ and $y\in \mathcal{W}^{s}(x,\varepsilon)$, then
\begin{enumerate}[i)]
\item $J^{i+1}(f_{i}(x))=J^{i+1}(f_{i}(y))$.
\item $R^{i+1}(f_{i}(x))=R^{i+1}(f_{i}(y))$.
\end{enumerate}

\begin{proof}(of Claim)

For i),  assume  that  $f_{i}(x)=\theta_{i+1}(\sigma_{i}(\bar{a}))\in T_{j}^{i+1}$ ($a_{1}=p_{j}$) and  $a_{0}=p_{s}$  (that is, $x=\theta_{i} (\bar{a})\in T_{s}^{i}$). By \eqref{erwer123} we have
$$f_{i}(y)\in f_{i}(\mathcal{W}^{s}(x,T_{s}^{i}))\subseteq  \mathcal{W}^{s}(f_{i}(x),T_{j}^{i+1}),$$
therefore $f_{i}(y)\in T_{j}^{i+1}
$. Similarly, if $f_{i}(y)\in T_{j}^{i+1}$, then $f_{i}(x)\in T_{j}^{i+1}$, and therefore $J^{i+1}(f_{i}(x))=J^{i+1}(f_{i}(y))$.

\medskip

For ii),  we prove that if $T_{j}^{i+1}\in J^{i+1}(f_{i}(x))=J^{i+1}(f_{i}(y))$ and $T_{k}^{i+1}\cap T_{j}^{i+1}\neq \emptyset$ for $T_{k}^{i+1}\in J^{i+1}$,
then  $f_{i}(x),f_{i}(y)$ belong to the same $T^{n,i+1}_{j,k}$.  Since $f_{i}(y)\in \mathcal{W}^{s} (f_{i}(x),{\varepsilon})$, we have $\mathcal{W}^{s}(f_{i}(y),T_{j}^{i+1})=\mathcal{W}^{s}(f_{i}(x),T_{j}^{i+1})$. Thus $f_{i}(x)$ and $f_{i}(y)$ belong to   $T^{1,i+1}_{j,k}\cup T^{3,i+1}_{j,k}$ or $T^{2,i+1}_{j,k}\cup T^{4,i+1}_{j,k}$. Suppose  $$\mathcal{W}^{u}(f_{i}(y),T_{j}^{i+1})\cap T_{k}^{i+1}=\emptyset\quad\text{and}\quad    \mathcal{W}^{u}(f_{i}(x),T_{j}^{i+1})\cap T_{k}^{i+1}\neq \emptyset.$$
Take $f_{i}(z)\in  \mathcal{W}^{u}(f_{i}(x),T_{j}^{i+1})\cap T_{k}^{i+1}$.  From \eqref{erwer1232} we have
 $ f_{i}(z)\in   f_{i}(\mathcal{W}^{u}(x,T_{s}^{i}))$, that is,
   $z\in \mathcal{W}^{u}(x,T_{s}^{i})$, since $x\in T_{s}^{i}$. Write  $f_{i}(z)=\theta_{i+1}(\sigma_{i}(\bar{b}))$, where $a_{1}=p_{k}$ and $a_{0}=p_{t}$ for some $t=1,\dots,k$.
 Then $z\in T_{t}^{i}$ and $f_{i}(\mathcal{W}^{s}(z,T_{t}^{i}))\subseteq \mathcal{W}^{s}(f_{i}(z),T_{k}^{i+1})$. Hence $z\in T_{t}^{i}\cap T_{s}^{i}\neq \emptyset$.
  Since $x\in T_{s}^{i}$, we have  $T_{s}^{i}\in J^{i}(x)=J^{i}(y)$.

 Now, given that   $z\in \mathcal{W}^{u}(x,T_{s}^{i})\cap T_{t}^{i}$ and   $x,y$ are in the same $T^{n,i}_{s,t}$, there exists some $w\in  \mathcal{W}^{u}(y,T_{s}^{i})\cap T_{t}^{i}$. Hence
 $$v=[z,y]=[z,w]\in \mathcal{W}^{s}(z,T_{t}^{i}
)\cap\mathcal{W}^{u}(y,T_{s}^{i}) $$
 and, since $f_{i}(z),f_{i}(y)\in T_{j}^{i+1}$ and $T_{j}^{i+1}$ is a rectangle, we have $$f_{i}(v)=[f_{i}(z),f_{i}(y)]\in \mathcal{W}^{u}(f_{i}(z),T_{k}^{i+1})\cap \mathcal{W}^{u}(f_{i}(y),T_{j}^{i+1}),$$ which is a contradiction. Therefore $R^{i+1}(f_{i}(x))=R^{i+1}(f_{i}(y))$.
\end{proof}

 The rest of the proof, which  we present below, is taken from the proof of Bowen   for Anosov diffeomorphisms (\cite{Z03}). All the facts are topological and are valid for our case.

 \medskip

 For small $\delta>0$,  set
 $$Y_{s}^{i}=\bigcup\left\{\mathcal{W}^{s}(z,\delta): z\in \underset{j}\cup\partial^{s}T_{j}^{i}\right\}\quad\text{and}\quad Y_{u}^{i}=\bigcup\left\{\mathcal{W}^{u}(z,\delta): z\in \underset{j}\cup\partial^{u}T_{j}^{i}\right\}.$$  $Y_{s}^{i}$ and $Y_{u}^{i}$ are closed and nowhere dense. Hence $M_{i}\setminus (Y_{s}^{i}\cup Y_{u}^{i})\subseteq Z_{\ast}^{i}$ is open and dense in $M_{i}$. Furthermore, if $x\notin (Y_{s}^{i}\cup Y_{u}^{i})\cap f_{i}^{-1}(Y_{s}^{i+1}\cup Y_{u}^{i+1})$, then $x\in Z_{\ast}^{i}\cap f^{-1}_{i}(Z_{\ast}^{i+1})$ and hence the set  $\{z\in \mathcal{W}^{s}(x,R^{i}(x)): z\in Z_{\ast}^{i}\cap f_{i}^{-1}(Z_{\ast}^{i+1})\}$ is open and dense in $\mathcal{W}^{s}(x,\overline{R^{i}(x)})$  (as a subset of $\mathcal{W}^{s}(x,\varepsilon)\cap M_{i}$).  By the previous claim we have $R^{i+1}(f_{i}(y))=R^{i+1}(f_{i}(x))$ for $y$ in $\{z\in \mathcal{W}^{s}(x,R^{i}(x)): z\in Z_{\ast}^{i}\cap f_{i}^{-1}(Z_{\ast}^{i+1})\}$.

 By continuity
 $$f_{i}(\mathcal{W}^{s}(x,\overline{R^{i}(x)}))\subseteq \overline{R^{i+1}(f_{i}(x))} $$
and since $f_{i}(\mathcal{W}^{s}(x,\overline{R^{i}(x)}))\subseteq \mathcal{W}^{s}(f_{i}(x),\varepsilon)$, then    $f_{i}(\mathcal{W}^{s}(x,\overline{R^{i}(x)}))\subseteq \mathcal{W}^{s}(f_{i}(x),\overline{R^{i+1}(f_{i}(x))})$.

\medskip

If $\text{int}( R^{i}_{k}) \cap f_{i}^{-1}(\text{int}( R_{j}^{i+1}))\neq \emptyset$, then there exists some $x\in \text{int}( R^{i}_{k}) \cap f_{i}^{-1}(\text{int}( R_{j}^{i+1}))$ such that $R^{i}_{k}=\overline{R^{i}(x)}$ and $R^{i+1}_{j}=\overline{R^{i+1}(f_{i}(x))}.$ If $z\in R_{k}^{i}\cap f^{-1}_{i}(R^{i+1}_{j})$, then   $$\mathcal{W}^{s}(z, R^{i}_{k})=\{[z,y]: y\in \mathcal{W}^{s}(x,R^{i}_{k})\}$$ and
 \begin{align*} f_{i}(\mathcal{W}^{s}(z, R_{k}^{i}))&=\{[f_{i}(z), f_{i}(y)]: y\in \mathcal{W}^{s}(x,R_{k}^{i})\}\subseteq \{[f_{i}(z), w]: w\in \mathcal{W}^{s}(f_{i}(x),R_{j})\}\\
 &\subseteq \mathcal{W}^{s}(f_{i}(z),R_{j}^{i+1}).
 \end{align*}

Analogously we can prove that $  \mathcal{W}^{u}(f_{i}(x), R_{j}^{i+1})\subseteq f_{i}(\mathcal{W}^{u}(x, R_{k}^{i}))$, which completes the proof.
\end{proof}

\begin{proof}[Proof of Theorem D] 
Follows from Theorem \ref{Final}.
\end{proof}

\section{Further Generalizations}

There are several directions to pursue the studies of  Anosov families and questions that still need to be answered. We leave here some topics of interest, and issues that merit attention in the study of this class of dynamical systems.

\begin{enumerate}[(i)]

\item  Verifying if in the case of non-stationary dynamic systems, we can use the shadowing property to have structural stability.

\item Extending the works done in \cite{Fisher05}, \cite{Fisher07} and \cite{Fisher} to the orientation-preserving case, to higher genus surfaces, to higher dimensional tori, and to nonlinear Anosov maps. In \cite{Fisher}, Section 1.6, the authors address these issues in detail.


\item Generalizing Anosov families  to continuous time. In this case we would have a \emph{flow families} instead of  a diffeomorphism families. According to comments and suggestions from \cite{Fisher} Section 1.6, examples of flow families to consider are: (i) the suspension flow of a mapping family and (ii) those given  by nonautonomous differential equations, where the orbits are integral curves of time-varying vector fields. The authors note that an interesting fact in the  suspension of a multiplicative family is that it models the \emph{scenery flow} of the transverse irrational circle rotation. See also \cite{Fisher05} and \cite{Fisher07} for details. \end{enumerate}



\end{document}